\documentclass[12pt,reqno]{amsart}
\usepackage{amssymb,amsmath,amsthm,enumerate,bbm}
\usepackage[a4paper]{geometry}
\sloppy
\frenchspacing


\DeclareMathOperator{\sign}{sign}

\DeclareMathOperator{\rank}{rank}
\DeclareMathOperator{\supp}{supp}
\DeclareMathOperator{\dist}{dist}

\DeclareMathOperator{\VMO}{VMO}
\DeclareMathOperator{\BMO}{BMO}

\newcommand{\abs}[1]{\lvert#1\rvert}
\newcommand{\Abs}[1]{\left\lvert#1\right\rvert}
\newcommand{\norm}[1]{\lVert#1\rVert}


\newcommand{\bbT}{{\mathbb T}}
\newcommand{\bbR}{{\mathbb R}}
\newcommand{\bbC}{{\mathbb C}}

\newcommand{\bbZ}{{\mathbb Z}}

\newcommand{\bbD}{{\mathbb D}}

\newcommand{\wh}{\widehat}

\newcommand{\ov}{\overline}

\newcommand{\calP}{\mathcal{P}}
\newcommand{\calB}{\mathcal{B}}

\newcommand{\calR}{\mathcal{R}}

\newcommand{\Sch}{\mathbf{S}}


\numberwithin{equation}{section}


\theoremstyle{plain}
\newtheorem{theorem}{\bf Theorem}[section]
\newtheorem*{theorem*}{Theorem 1.1$'$}
\newtheorem{lemma}[theorem]{\bf Lemma}
\newtheorem{proposition}[theorem]{\bf Proposition}
\newtheorem{assumption}[theorem]{\bf Assumption}

\theoremstyle{definition}

\theoremstyle{remark}
\newtheorem*{remark*}{\bf Remark}
\newtheorem{remark}[theorem]{\bf Remark}


\newcommand{\wt}{\widetilde}

\newcommand{\const}{\mathrm{const}}
\newcommand{\1}{\mathbbm{1}}


\begin{document}

\title[Best rational approximation]{Best rational approximation of  functions with logarithmic singularities}

\author{Alexander Pushnitski}
\address{Department of Mathematics, King's College London, Strand, London, WC2R~2LS, U.K.}
\email{alexander.pushnitski@kcl.ac.uk}

\author{Dmitri Yafaev}
\address{Department of Mathematics, University of Rennes-1,
Campus Beaulieu, 35042, Rennes, France}
\email{yafaev@univ-rennes1.fr}

\subjclass[2000]{41A20, 47B06, 47B35}

\keywords{Hankel operators,  asymptotics of singular values, logarithmic singularities, rational approximation}

\begin{abstract}
We consider functions $\omega$ on the unit circle $\bbT$ 
with a finite number of logarithmic singularities.
We study the  approximation of $\omega$ by rational functions and find an asymptotic formula for the distance in the $\BMO$-norm between $\omega$ and the set of 
rational functions of degree $n$ as $n\to\infty$. 
Our approach relies on the Adamyan-Arov-Krein theorem and on the
study of the asymptotic behaviour of singular values of Hankel operators. 
\end{abstract}


\maketitle

\section{Introduction}\label{sec.z}
 
\subsection{Overview}

The rate of convergence of both rational and polynomial  approximations 
to a given function $\varphi$ is determined by the  smoothness of $\varphi$. 
Of course in general the rational approximations converge much faster than the polynomial ones.

Let us briefly describe the fundamental results of approximation theory relevant to this paper; 
see \cite{Stahl} for more information. 
We denote by  $\calP_n$  the set of all polynomials in $x\in\bbR$ of degree $\leq n$. 
Similarly,  $ \mathcal{T}_n$  is the set of all trigonometric polynomials   of degree $\leq n$ defined on the unit circle $\bbT$. 
According to the classical Jackson-Bernstein theorem (see, e.g., the book \cite{dVL}), 
the distance between $\varphi$ and $\mathcal{T}_n$ in the $L^\infty$-norm satisfies the estimate
$$
\dist_{L^\infty(\bbT)}\{\varphi, \mathcal{T}_n\} =O (n^{-\alpha}),\quad \alpha >0, \quad n\to\infty,
$$
if and only if $\varphi$ belongs to the H\"older-Zygmund class $\Lambda_{\alpha}$
(the  definitions of relevant function classes are collected in the Appendix).

Further, for the function 
$\varphi(x)=\abs{x}^\alpha$ defined on some interval of the real line, for example on $[-1,1]$, 
S.~N.~Bernstein \cite{Bern1, Bern2}  proved 
the existence of the limit  
\begin{equation}
\lim_{n\to\infty}n^\alpha\dist_{L^\infty(-1,1) }\{\abs{x}^\alpha,\calP_n\} ={\sf b}(\alpha)
\label{eq:Bern}
\end{equation}
where ${\sf b}(\alpha)\neq 0$ if $\alpha\neq 2,4, \ldots$. The number ${\sf b}(\alpha)$ is known as the \emph{Bernstein constant}.

Next, consider the problem of rational approximation. 
The degree of a rational function $p/q$ ($p$, $q$ are polynomials with no non-constant common 
divisors) is defined as $\max\{\deg p,\deg q\}$. 
We denote by $\calR_n$ the set of all rational functions 
of degree $\leq n$ in the complex plane. 
D.~Newman proved in \cite{Newman} that for the function 
$\varphi(x)=\abs{x}$ on the interval $[-1,1]$, the distance between $\varphi$ 
and $\calR_n$ in the $L^\infty$ norm satisfies the estimates
$$
e^{-c_{1} \sqrt{n}}  
\leq 
\dist_{L^\infty(-1,1)}\{\varphi,\calR_n\}
\leq 
e^{-c_{2} \sqrt{n}} 
$$
with some positive constants $c_{1}$, $c_{2}$.
This result was extended by A.~A.~Gonchar in
\cite{Gonchar1, Gonchar} to the functions $\varphi(x)=|x|^\alpha$; 
he established the same estimate   for   all $\alpha>0$,
$\alpha\neq 2,4,\ldots$.  
More recently, H.~Stahl \cite{Stahl} proved a remarkable result: he showed
that for such functions  one has the asymptotic relation
\begin{equation}
\lim_{n\to\infty} e^{\pi\sqrt{\alpha n}}\dist_{L^\infty(-1,1)}\{\abs{x}^\alpha,\calR_n\}
=
4^{1+\alpha/2}\abs{\sin\tfrac12\pi \alpha}, \quad \alpha>0;
\label{eq:St}
\end{equation}
see \cite{Stahl} for the history of the problem.

 In this paper, we discuss the rational approximation of functions with logarithmic
singularities of the type $(-\ln\abs{x})^{-\alpha}$  near $x=0$.
More precisely, let us fix $\alpha>0$ and consider the function 
$$
\varphi_+(x)=
\begin{cases}
(-\ln x)^{-\alpha}& x\in (0, 1/2]
\\
0 & x\in[-1/2, 0]
\end{cases}
$$
on the interval $[-1/2,1/2]$. 
Clearly, $\varphi_+$ does not satisfy the H\"older continuity condition (with any exponent) 
and so, according to the Jackson-Bernstein theorem, $\dist_{L^\infty}\{\varphi_+,\calP_n\}$
goes to zero slower than any power of $n^{-1}$. 
On the other hand, 
A.~A.~Gonchar in \cite{Gonchar} proved the two-sided estimates
\begin{equation}
c n^{-\alpha}  
\leq 
\dist_{L^\infty(-1/2,1/2)}\{\varphi_{+},\calR_n\}
\leq
C  (\ln n/n)^\alpha
\label{eq:Go}
\end{equation}
with some positive constants $c$ and $C$. 

Our aim is to obtain an asymptotic relation for the function $\varphi_{+} (x)$ 
(and for more general functions with similar singularities) in the spirit of \eqref{eq:St} but with $n^\alpha$ instead of the exponential $e^{\pi\sqrt{\alpha n}}$.  
 We obtain such a  relation, but for the $\BMO$-norm  instead of the $L^\infty$-norm. 
In fact, in harmonic analysis the space  $\BMO$ 
(functions with bounded mean oscillation)
often plays the role of a proper substitute for $L^\infty$; 
this space is only slightly larger than $L^\infty$ and $L^\infty\subset \BMO\subset  L^p$ for any $p <\infty$. 
Further, this space is   particularly well adapted to treating functions with logarithmic singularities and allows us to study unbounded functions. 
Indeed, along with $\varphi_+$ we consider the   function
$$
\varphi_0(x)
=(-\ln\abs{x})^{1-\alpha}, \quad x\in[-1/2,1/2],
$$
which \emph{a priori} looks more singular than $\varphi_+$ because of the extra factor $\ln\abs{x}$.
For $0<\alpha<1$, this function is unbounded, but it is in the VMO class (functions with vanishing mean oscillation). 
Just as for $\varphi_+$,
we show that $n^\alpha\dist_{\BMO}\{ \varphi_0,\calR_n\}$  attains a finite positive limit as $n\to\infty$
which we compute explicitly. 
Comparing these facts with the classical Bernstein result \eqref{eq:Bern}, we see that, for functions with logarithmic singularities, rational approximations play the same role as polynomial approximations play for
functions with power singularities. 

It will be convenient for us to work with functions defined on the unit circle $\bbT$ of the 
complex plane, instead of on an interval of the real line. So, to be more precise, 
below we consider the analogues of $\varphi_+$ and $\varphi_0$ on the unit circle.

Our approach relies  on a combination of 
the fundamental  Adamyan-Arov-Krein (AAK) theorem \cite{AAK}
and of our previous results \cite{II,LOC} on the asymptotic behaviour 
of singular values of Hankel operators of a certain class.
The AAK theorem relates the rational approximation 
of a function $\varphi$ (defined on $\bbT$) in the $\BMO$-norm to the singular values 
of the Hankel operator with the symbol $\varphi$. 
This explains why we work with the $\BMO$-norm rather than with the $L^\infty$-norm. 

Using the AAK theorem, V.~V.~Peller \cite{Peller} has obtained 
the following analogue of the Jackson-Bernstein theorem for the rational approximations in the BMO norm.
He proved that 
\begin{equation}
\dist_{\BMO(\bbT)}\{\varphi,\calR_n\} =O (n^{-\alpha}),\quad \alpha >0, \quad n\to\infty,
\label{eq:BesPel}
\end{equation}
if and only if $\varphi$ belongs to a certain Besov-Lorentz class, denoted by 
$\mathfrak{B}^{\alpha}_{1/\alpha,\infty}$ in \cite{Peller}. 
We reproduce the definition of this class in the Appendix. 
Of course, the specific functions $\varphi$ with logarithmic singularities that we
consider in this paper belong to this class.

\subsection{BMO and VMO}
We denote by $\bbT$ the unit circle in the complex plane, equipped with the 
normalized Lebesgue measure $dm(\mu)= (2\pi i \mu)^{-1} d\mu$, $\mu\in\bbT$, 
and set $L^p\equiv L^p(\bbT)$. 
For $f\in L^1$, let
$$
\wh f(j)= \int_{\bbT}  f(\mu) \mu^{-j} dm(\mu),
\quad 
j\in\bbZ ,
$$
be the Fourier coefficients of $f$. 
For $1\leq p\leq\infty$, the Hardy classes $H^p_+$ and $H^p_-$ are defined 
in a standard way as
$$
H^p_+=\{f\in L^p(\bbT): \wh f(j)=0 \quad \forall j<0\},
\quad
H^p_-=\{f\in L^p(\bbT): \wh f(j)=0 \quad \forall j\geq0\}.
$$
We denote by $P_+: L^2 \to H^2_+ $ and $P_{-}: L^2 \to H^2_-$ the orthogonal projections onto $H^2_+ $ 
and $H^2_- $.
There is a lack of complete symmetry between $H^2_+$ and $H^2_-$ because the constant
functions belong to $H^2_+$ but not to $H^2_-$.
This results in a slight asymmetry in some of the formulas below. 

The class $\BMO(\bbT)=:\BMO$ can be described in many equivalent ways, with equivalent choices
for the norm;
see, e.g., \cite{Koosis}. 
Since we are interested in the \emph{asympotics} of the distance in the $\BMO$-norm, the precise
choice of the norm will be important for us.

Let us start by fixing the $\BMO$-norm for 
functions  analytic outside the unit disk. 
A function $f\in H^2_-$ belongs to $\BMO$ if and only if 
$f-g\in L^\infty$ for some $g\in H^2_+$; then we   set
\begin{equation}
\| f\|_{ \BMO } =\inf\{ \|f-g   \|_{L^\infty}: g\in H^2_{+}\}, \quad f\in H^2_-.
\label{eq:BMO}
\end{equation}

\begin{remark*} 
Let $H^1_+(0)=\{f\in H^1_+: \wh f(0)=0\}$. As is well known (see,  e.g. \cite[Section VII.A.1]{Koosis}),
the norm \eqref{eq:BMO} coincides with the norm of $f$ in the dual space $H^1_+(0)^*$.
\end{remark*}

Next, for an arbitrary $f\in L^2$ we set
\begin{equation}
\norm{f}_{\BMO}  
=  
\max\{\norm{P_{-} f}_{\BMO},\norm{P_{-}\bar{f}}_{\BMO},\abs{\wh{f}(0)}\},
\label{eq:BMBM}
\end{equation}
if the right hand side is finite. 
Here $\norm{P_-f}_{\BMO}$, $\norm{P_{-}\bar{f}}_{\BMO}$ 
are defined by \eqref{eq:BMO}.
Clearly, the norm  \eqref{eq:BMBM} is invariant with respect to the complex conjugation,
$$
\norm{\overline{f}}_{\BMO}=\norm{f}_{\BMO}.
$$
With this definition, we have
\begin{equation}
\norm{P_+ f}_{\BMO}
=
\max\{\norm{P_- \bar{f}}_{\BMO},\abs{\wh f(0)}\},
\label{z7}
\end{equation}
\begin{equation}
\norm{f}_{\BMO}=\max\{\norm{P_+f}_{\BMO},\norm{P_-f}_{\BMO}\}.
\label{z6}
\end{equation}
Finally, we recall that the subclass $\VMO\subset\BMO$ is the closure of all continuous functions in the $\BMO$-norm.

Let us comment on our choice of the norm in BMO.
Definition \eqref{eq:BMO} is absolutely crucial for our approach, as it ensures the
connection of rational approximations with Hankel operators via the AAK theorem. 
On the other hand, the details of the definition \eqref{eq:BMBM} 
are less important: the term $\abs{\wh{f}(0)}$ is inessential and the other two quantities 
in the right-hand side can be  combined in various ways. 
Our choice \eqref{eq:BMBM}  is motivated by the fact that it simplifies the expressions for
some coefficients appearing in the asymptotic formulas below.  
We could have chosen, for example, 
the following alternative definition of the BMO norm: 
$$
\norm{f}_{\BMO}^*  
=  
\norm{P_{-} f}_{\BMO}+\norm{P_{-}\bar{f}}_{\BMO}+\abs{\wh{f}(0)} ;
$$
this would only change the constant in the right-hand side of some asymptotic formulas, such as \eqref{z5} below.

\subsection{Rational approximation}
We denote by $\calR_n$ the set of all rational functions 
of degree $\leq n$ in the complex plane without poles on $\bbT$ and set
$$
\calR_n^{\pm}=\calR_n\cap H^2_\pm=P_\pm(\calR_n).
$$
Notice that $\calR_0^{+}=\{\const\}$, while $\calR^{-}_0=\{0\}$.

For $\omega\in\BMO$ and $n\geq0$, we define
\begin{equation}
\begin{split}
\rho_n(\omega)&=\dist_{\BMO}\{\omega,\calR_n\}:= \min\{ \|\omega -r  \|_{\BMO}: r\in \calR_n\},
\\
\rho_n^+(\omega)&= \dist_{\BMO}\{P_+\omega,\calR_n\} = \dist_{\BMO}\{P_+\omega,\calR^{+}_n\},
\\
\rho_n^-(\omega)&= \dist_{\BMO}\{P_-\omega,\calR_n\} = \dist_{\BMO}\{P_-\omega,\calR^{-}_n\}.
\end{split}
\label{eq:DR}
\end{equation}
There are some simple identities relating the quantities $\rho_n$, $\rho_n^+$, $\rho_n^-$,
see  Lemma~\ref{AAKXm} below. 
It is clear that $\omega\in\VMO$ (resp. $P_\pm\omega\in\VMO$)
if and only if $\rho_n(\omega)\to0$ (resp. $\rho_n^\pm(\omega)\to0$) as $n\to\infty$.

From our choice of the $\BMO$-norm it follows that $\rho_n^-(\omega)$ can be alternatively written as
\begin{equation}
\rho_n^-(\omega)=\dist_{L^\infty}\{\omega, \calR^{-}_n +H^2_+\}.
\label{eq:N-T}
\end{equation}
The problem of approximation by functions of the class
$\calR _n ^-  +H^2_+$ in $L^\infty$-norm is known as the Nehari-Takagi problem. 
Of course, a similar statement is true for $\rho_n^+(\omega)$ (see Lemma~\ref{lma.z1} below):
\begin{equation}
\rho_n^+(\omega)=\dist_{L^\infty}\{\omega,\calR^+_n +H^2_-\}.
\label{eq:N-T+}
\end{equation}

\subsection{Outline of results}
To give the flavour of our main result, first
we consider the following model functions:
\begin{align}
\omega_0 (e^{i\theta})&= \abs{\log\abs{\theta}}^{1-\alpha}\chi_0(\theta), & \theta\in[-\pi,\pi),
\label{z10}
\\
\omega_\pm (e^{i\theta})&=  \abs{\log\abs{\theta}}^{-\alpha}\chi_0(\theta) \1_\pm(\theta), & \theta\in[-\pi,\pi).
\label{z11}
\end{align}
Here $\alpha>0$ is a fixed parameter, 
$\1_\pm$ is the characteristic function of the semi-axis $\bbR_\pm$, 
and $\chi_0$ is a smooth even cutoff function, whose role is to remove the ``undesired'' 
singularity of the functions \eqref{z10} and \eqref{z11} at $\abs{\theta}=1$. 
More precisely, $\chi_0\in C_0^\infty(\bbR)$ is a function
which vanishes identically for $\abs{\theta}>c$ with some $c<1$ 
and such that $\chi_0(\theta)=1$ 
in some neighborhood of zero.

The following statement is a particular case of Theorem~\ref{nonAnalytic} below. 
We set
\begin{equation}
\varkappa(\alpha)=2^{-\alpha}
\pi^{1-2\alpha}
B(\tfrac1{2\alpha},\tfrac12)^\alpha
\label{a7}
\end{equation}
where $B(\cdot, \cdot)$ is the Beta function.

\begin{theorem}\label{thm.b1}
Let $\alpha>0$ and let 
$$
\omega(\mu)= v_{0}\omega_{0}(\mu)+ v_{+}\omega_{+}(\mu)+v_{-}\omega_{-}(\mu),
$$
where $v_{0}, v_{+} , v_{-} $  are arbitrary complex numbers. Put
$$
{  b}^{\pm}
=
\tfrac12(1-\alpha)v_0
\pm
\tfrac{1}{2\pi i } (v_{+}    - v_{-}  ). 
$$ 
Then
\begin{align}
\lim_{n\to\infty} n^\alpha \rho_n^\pm( \omega )
&=
\varkappa(\alpha) |{  b}^{\pm}|,
\notag
\\
\lim_{n\to\infty} n^\alpha \rho_n( \omega)
&=
\varkappa(\alpha)
\bigl(|{ b}^{+}|^{1/\alpha}+ |{ b}^{-}|^{1/\alpha}\bigr)^{\alpha}.
\label{z5}
\end{align}
\end{theorem}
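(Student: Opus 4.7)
The plan is to apply the AAK theorem to convert the BMO-distance into a singular-value problem for Hankel operators, invoke the authors' previous asymptotic results from \cite{II, LOC} on Hankel operators with logarithmic symbols, and then combine the one-sided results through a splitting argument to handle $\rho_n(\omega)$.

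First I would use \eqref{eq:N-T} together with the AAK theorem to write
$$
\rho_n^-(\omega) = \dist_{L^\infty}\{\omega, \calR_n^- + H^2_+\} = s_n(H_\omega),
$$
where $H_\omega$ is the Hankel operator with symbol $\omega$ acting from $H^2_+$ into $H^2_-$ and $s_n$ denotes its $n$-th singular value; analogously $\rho_n^+(\omega) = s_n(H_{\bar\omega})$ by \eqref{eq:N-T+}. Since $\chi_0$ localizes $\omega$ near $\theta = 0$, modulo contributions whose singular values decay faster than any power of $n^{-1}$, the singular-value asymptotics are controlled by the singular behavior of $P_-\omega$ at $\mu = 1$. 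I would then extract the leading singular part of $P_-\omega$ explicitly: the even symbol $\omega_0(e^{i\theta})\sim \abs{\log\abs{\theta}}^{1-\alpha}$ contributes, via the Hilbert transform implicit in $P_-$, a logarithmic-power term whose leading coefficient carries the factor $\tfrac12(1-\alpha)$ (coming from the derivative of $\abs{\log\abs{\theta}}^{1-\alpha}$), while each one-sided $\omega_\pm(e^{i\theta})\sim \1_\pm(\theta)\abs{\log\abs{\theta}}^{-\alpha}$ gives a contribution with coefficient $\pm\tfrac{1}{2\pi i}$ coming from the Hilbert transform of $\1_\pm$. Summing linearly identifies the leading coefficient of $P_-\omega$ as exactly $b^-$; the analogous computation for $\bar\omega$ yields $b^+$.

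Next I would invoke the main asymptotic theorems of \cite{II, LOC}, which give that for a Hankel operator $H_f$ whose symbol has a single singularity of this logarithmic model form with leading coefficient $b$, one has
$$
s_n(H_f) = \varkappa(\alpha)\abs{b}\, n^{-\alpha} + o(n^{-\alpha}),
$$
with $\varkappa(\alpha)$ as in \eqref{a7}. This immediately yields $\rho_n^\pm(\omega)\sim \varkappa(\alpha)\abs{b^\pm}n^{-\alpha}$. For the two-sided distance $\rho_n(\omega)$, I would combine \eqref{z6} with Lemma~\ref{AAKXm}, which supplies a splitting identity of the type $\rho_n(\omega) = \min_{k^+ + k^- \leq n}\max\{\rho_{k^+}^+(\omega), \rho_{k^-}^-(\omega)\}$, substitute the one-sided asymptotics, and optimize the partition of $n$ by balancing $\abs{b^+}(k^+)^{-\alpha} = \abs{b^-}(k^-)^{-\alpha}$. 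The optimal choice $k^\pm/n \to \abs{b^\pm}^{1/\alpha}/(\abs{b^+}^{1/\alpha} + \abs{b^-}^{1/\alpha})$ produces the balanced value $\varkappa(\alpha)\bigl(\abs{b^+}^{1/\alpha}+\abs{b^-}^{1/\alpha}\bigr)^\alpha n^{-\alpha}(1+o(1))$, which is \eqref{z5}.

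The main obstacle will be the second step: correctly isolating the leading singular parts of $P_\pm\omega_j$ modulo more regular terms contributing only $o(n^{-\alpha})$. The projector $P_-$ acts essentially as a Hilbert transform, mixing even and odd components of the symbol and producing spurious subleading logarithmic factors that must be carefully tracked to extract the precise coefficients $\tfrac12(1-\alpha)$ and $\pm\tfrac{1}{2\pi i}$. Once these leading singularities are correctly identified, the stability of the asymptotic formulas in \cite{II, LOC} under lower-order perturbations of the symbol reduces the remaining steps to essentially mechanical applications, and the splitting/balancing in the final step is a short calculus optimization.
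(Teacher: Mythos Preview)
Your overall architecture matches the paper exactly: AAK converts $\rho_n^\pm(\omega)$ to singular values $s_n(K(\omega))$ and $s_n(K(\overline\omega))$, the results of \cite{II,LOC} supply the power asymptotics $\varkappa(\alpha)\abs{b^\pm}n^{-\alpha}$, and the two-sided distance is obtained by the splitting identity $\rho_n(\omega)=\min_{n_++n_-=n}\max\{\rho_{n_+}^+,\rho_{n_-}^-\}$ (the paper phrases this equivalently via counting functions, Lemma~\ref{CF}, which makes the balancing step a one-line addition instead of an optimisation).

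The one substantive difference is in how you propose to extract the coefficients $b^\pm$. You plan to analyse $P_-\omega$ as a function on $\bbT$, reading off the leading singularity via the Hilbert transform of $\1_\pm$ and of $\abs{\log\abs{\theta}}^{1-\alpha}$. The paper does not do this: the input required by Theorem~\ref{thm.a4} (the quoted result from \cite{LOC}) is the asymptotics of the \emph{matrix entries} $h(j)=\wh\omega(-j-1)$, so the paper instead computes the Fourier coefficients of $\omega$ directly (Theorem~\ref{lma.b1}), via a contour deformation reducing the Fourier integral to a Laplace integral whose asymptotics are elementary. This is more direct, because it produces exactly the hypothesis \eqref{a13} that \cite{LOC} needs, together with the iterated-difference estimates \eqref{a5a} on the error term; your Hilbert-transform route would still have to be translated into Fourier-coefficient language and would have to control not just the leading term but the error to the precision \eqref{a5a}, which is where the real work lies. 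Your heuristics for the constants (``derivative of $\abs{\log\abs{\theta}}^{1-\alpha}$'', ``Hilbert transform of $\1_\pm$'') are suggestive but not how the numbers $\tfrac12(1-\alpha)$ and $\tfrac{1}{2\pi i}$ actually arise in the paper's calculation; there they come out of the two-term Laplace asymptotics in Lemma~\ref{Lapl} combined with the phase shift $\pm i\pi/2$ picked up when rotating the Fourier contour onto the imaginary axis (Lemma~\ref{L}).
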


\begin{remark}
\begin{enumerate}[1.]

\item
Similarly to Stahl's formula \eqref{eq:St}, the asymptotic coefficient $\varkappa(\alpha)$ in Theorem~\ref{thm.b1} is quite explicit. 
In contrast to this, it is not known whether the Bernstein constant ${\sf b}(\alpha)$ 
in \eqref{eq:Bern} can be expressed in terms of standard transcendentals; 
see,  e.g. \cite{Lu} for more on this issue. 
   
\item
For $0<\alpha<1$, the function $\omega_0$ is not in $L^\infty$,  although $\omega_0\in\VMO$.

\item
For $\alpha=1$, the function $\omega_0=\chi_0\in C^\infty$.
This agrees with the fact that in this case $\omega_0$  makes no contribution to ${b}^{\pm}$. 

\item
For $\alpha=0$, the functions $\omega_0$ and $\omega_\pm$
are in BMO but not in VMO.
Thus, in this case $\rho_n(\omega)$ (and $\rho_n^\pm(\omega)$) do not tend to zero as $n\to\infty$. 
This shows that one cannot go beyond $\alpha>0$ in Theorem~\ref{thm.b1}.

\item
Consider the case $v_0=0$; then the asymptotic coefficients ${ b}^{\pm}$ vanish precisely when 
$v_+=v_-$, i.e. when $\omega(e^{i\theta})$ is an even function of $\theta$.
This partially explains the fact that although the even function 
$\omega_0$ is more singular than $\omega_+$ and $\omega_-$, 
the rational approximations of these functions have the same power rate of convergence.

\item
Comparing Theorem~\ref{thm.b1} with Peller's result \eqref{eq:BesPel}, 
we see that the functions $\omega_0$ and $\omega_\pm$ 
belong to the Besov-Lorentz  class $\mathfrak{B}^{\alpha}_{1/\alpha,\infty}$, but do not belong 
to the class $\mathfrak{B}^{\beta}_{1/ \beta,\infty}$ for any $\beta>\alpha$. This yields
explicit (and sharp!)  examples of functions in these Besov-Lorentz classes.

\item
The analogue 
of Theorem~\ref{thm.b1} for $L^\infty$-distances remains an open problem.
\end{enumerate}
\end{remark}

For functions $\omega\in \BMO \cap \, H^2_\pm$  , the distances $\rho_{n}^\pm (\omega) = \rho_{n}  (\omega)$ do not depend on the choice of the norm  \eqref{eq:BMBM} --- see relations \eqref{eq:N-T} and \eqref{eq:N-T+}. Therefore for functions analytic for $|z| <1$ (or for $|z| >1$),  Theorem~\ref{thm.b1} is stated in a quite intrinsic form.
Consider, for example, the function
\begin{equation}
\omega (z)= (-\log (1-z) +c)^{1-\alpha}, \quad \alpha > 0,
\label{eq:RR1}
\end{equation}
where a number $c$ is chosen in such a way that $\log (1-z) \neq c$ for all $z$ with $\abs{z}\leq 1$. 
Then $\omega(z)$ is analytic in the unit disk $\bbD$ and is singular only at the point $z=1$ on the unit circle. 

The following statement is a particular case of Theorem~\ref{Analytic} below. 
\begin{theorem}\label{RR}
Let   the function $\omega(z)$   be defined  by formula \eqref{eq:RR1}. Then there exists
$$
\lim_{n\to\infty} n^\alpha  \rho_{n}^+ (\omega) = |1-\alpha| \varkappa(\alpha).
$$
 \end{theorem}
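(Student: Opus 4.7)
The plan is to reduce the approximation problem to the singular value asymptotics of a Hankel operator with analytic symbol, and then apply the general result Theorem~\ref{Analytic}. Since $\omega$ is analytic in $\bbD$ and has its only singularity on $\bbT$ at $z=1$, all the relevant machinery is naturally adapted to $\omega$ itself (indeed $P_+\omega = \omega$ up to a harmless constant).

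First, using the Nehari--Takagi identity \eqref{eq:N-T+} together with the AAK theorem, I would write
$$
\rho_n^+(\omega)
=
\dist_{L^\infty}\{\omega,\calR_n^+ + H^2_-\}
=
s_n(\tilde\Gamma_\omega),
$$
where $\tilde\Gamma_\omega : H^2_- \to H^2_+$ is the Hankel operator with analytic symbol $\omega$, having matrix $(\wh\omega(j+k+1))_{j,k\geq 0}$ in the standard monomial bases. By complex conjugation this is the same as $s_n(\Gamma_{\ov\omega})$ for the standard Hankel operator with co-analytic symbol $\ov\omega$.

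Second, I would isolate the leading singularity of $\omega$ at $z=1$. Writing $\zeta = 1-z$ and expanding,
$$
\omega(z)
=
(-\log\zeta)^{1-\alpha}\Bigl(1+\frac{c}{-\log\zeta}\Bigr)^{1-\alpha}
=
(-\log\zeta)^{1-\alpha} + O\bigl((-\log\zeta)^{-\alpha}\bigr),
$$
so $\omega$ differs from the canonical model $(-\log(1-z))^{1-\alpha}$ by a symbol of strictly weaker logarithmic order. By the general framework of \cite{LOC,II}, the Hankel operator attached to such a lower-order remainder has singular values decaying strictly faster than $n^{-\alpha}$ and therefore contributes only to the error.

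Third, I would apply Theorem~\ref{Analytic} to the model symbol $(-\log(1-z))^{1-\alpha}$. That theorem should deliver an asymptotic of the form $s_n \sim \varkappa(\alpha)\,|C|\,n^{-\alpha}$, where $C$ is the amplitude of the leading singular profile; one expects $C=1-\alpha$. A useful consistency check comes from the non-analytic case in Theorem~\ref{thm.b1}: the model $\omega_0$ with $v_0=1$ yields $|b^\pm| = |1-\alpha|/2$ on each side of the singularity, whereas here the whole singularity is concentrated on the analytic side, doubling the effective amplitude to $|1-\alpha|$.

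The main obstacle is Step~3: one must verify that Theorem~\ref{Analytic} indeed applies to the perturbed symbol $\omega$ (not merely to the bare logarithmic model), and that the constant it produces is precisely $|1-\alpha|\varkappa(\alpha)$. This requires a stability statement for the singular value asymptotics under subleading logarithmic perturbations, together with the explicit singularity-to-coefficient dictionary provided by \cite{LOC,II}. A secondary technical point is to justify replacing $H^2_-$ by $H^\infty_-$ in the distance \eqref{eq:N-T+} so that AAK applies directly; this is standard and is presumably packaged into Lemma~\ref{AAKXm}.
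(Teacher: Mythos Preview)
Your overall strategy is correct, but you are taking an unnecessary detour. The paper's proof is a single sentence: Theorem~\ref{RR} is the special case of Theorem~\ref{Analytic} with $L=1$, $\zeta_1=1$, $v_1(z)\equiv 1$, and $u_1(z)\equiv c$. Since constants are trivially analytic in $\bbD$ and $C^\infty$ on $\ov\bbD$, and condition~\eqref{LA2a} is exactly the hypothesis on $c$ in \eqref{eq:RR1}, Theorem~\ref{Analytic} applies \emph{directly} to $\omega$ and gives the constant $|1-\alpha|\varkappa(\alpha)\cdot|v_1(\zeta_1)|=|1-\alpha|\varkappa(\alpha)$.

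Your Step~2 (expanding $\omega$ about the model $(-\log(1-z))^{1-\alpha}$ and controlling the remainder) is therefore not needed: the perturbation $u_\ell(z)$ in Theorem~\ref{Analytic} is precisely designed to absorb the shift $c$. Your ``main obstacle'' in Step~3 is not an obstacle at all --- you have simply overlooked that Theorem~\ref{Analytic} already allows an arbitrary analytic $u_\ell$. The stability argument you sketch (that the $O((-\log)^{-\alpha})$ remainder contributes $o(n^{-\alpha})$ to the singular values) is in fact correct and could be made rigorous via Theorem~\ref{Analytic} applied with $\alpha$ replaced by $\alpha+1$, but it is a longer path to the same destination. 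Your Step~1 and the consistency check with Theorem~\ref{thm.b1} are both fine.
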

 
Two sided estimates by $c\, n^{-\alpha}$ of  $ \rho_{n}^+ (\omega)$ for the function \eqref{eq:RR1} (and for more general functions of this type) are known. They were obtained by A.~A.~Pekarski\u{\i} in \cite{Pekarski1} (see Example~2.2). 
Later (see \cite{Pekarski},  relation (31)) Pekarski\u{\i} also proved the upper bound in the $L^\infty$-norm, 
$\dist_{L^\infty}\{\omega,  \calR^{+}_n\} =O (n^{-\alpha})$.

We emphasize that our main results, Theorems~\ref{HT} and \ref{Analytic} below, allow for an arbitrary finite number of logarithmic singularities of $\omega  $ on the unit circle.

\subsection{Some ideas of the approach}

We start by recalling some concepts related to Hankel operators; 
for the details, see, e.g., the books \cite{NK, Peller}.
For $\omega\in L^2$, the Hankel operator 
$K (\omega): H^2_+ \to H^2_-$ is defined by the formula
\begin{equation}
K(\omega)f=P_-(\omega f) .
\label{eq:HH}
\end{equation}
In this context, $\omega$ is called the \emph{symbol} of $K(\omega)$. 
The definition \eqref{eq:HH} makes sense, for example, on  all  
polynomials $f$. It is evident that $K(\omega)$ depends only on the  
  part $P_{-}\omega$ of $\omega$, i.e. 
$
K(\omega)=K(P_-\omega)$.
Nehari's theorem 
ensures that $K(\omega)$ is a bounded
operator if and only if $P_{-}\omega\in\BMO$, and Hartman's theorem 
says that $K(\omega)$ is compact if and only if $P_-\omega\in \VMO$;  see Proposition~\ref{Nehari} below.

Another equivalent point of view on Hankel operators appears when one considers
the matrix representation of $K(\omega)$ with respect to the standard bases in $H^2_\pm$. 
Consider the bases $\{\mu^j\}_{j=0}^\infty$ in $H^2_+$ and $\{\mu^{-1-k}\}_{k=0}^\infty$ in $H^2_-$. 
Then the matrix representation of $K(\omega)$ with respect to this pair of bases is 
$$
(K(\omega)\mu^j,\mu^{-1-k})_{L^2}
=
\wh \omega(-1-j-k), \quad j,k\geq0.
$$
It will be convenient to have a separate piece of notation for such infinite matrices, considered as
 operators on the sequence space $\ell^2:=\ell^2(\bbZ_+)$.
Given a sequence $\{h(j)\}_{j=0}^\infty$ of complex numbers, we define the 
Hankel operator $\Gamma(h)$ on  $\ell^2 $ by 
\begin{equation}
(\Gamma(h)u)(j)=\sum_{k=0}^\infty h(j+k)u(k).
\label{eq:a5}
\end{equation}
Now suppose $\omega\in L^2$ and $P_-\omega\in\VMO$; take $h(j)=\wh\omega(-1-j)$ for all $j\geq0$. 
Then the operators $K(\omega)$ and $\Gamma(h)$ have the same matrix representation with respect
to some pairs of orthonormal  bases, and hence $\Gamma(h)=\mathcal{ U}_{-}^*K(\omega)\mathcal{ U}_+$ for appropriate  unitary mappings $\mathcal{ U}_\pm : {\ell}^2 \to H^2_{\pm}$.
It follows that these operators have the same sequence of 
singular values (see Section~\ref{sec.back1}):
$$
s_n(K(\omega))=s_n(\Gamma(h)), \quad \forall n\geq0,
\quad
\text{ if } \quad h(j)=\wh\omega(-1-j), \quad \forall j\geq0 .
$$

The proof of our main result relies on the following two ingredients:
\begin{itemize}

\item
The Adamyan-Arov-Krein (AAK) theorem. One of the alternative ways to state this theorem is to say that  
$$
s_{n}(K(\omega))=\rho_n^- ( \omega), \quad n\geq0,
$$
if  $P_{-}\omega\in \VMO$.
We give some background related to this formula in Section~\ref{sec.back}.

\item
Our results of \cite{II, LOC}, which give an asymptotic formula for the singular values
of a class of Hankel operators $\Gamma(h)$. 
Those are the operators corresponding to the sequences $h$ of the form
\begin{equation}
h(j)= j^{-1}(\log j)^{-\alpha} + \text{error term}, \quad j\to\infty,
\label{z90}
\end{equation}
and, more generally, 
\begin{equation}
h(j)=\sum_{\ell=1}^L b_\ell j^{-1}(\log j)^{-\alpha}\zeta_\ell^{-j} + \text{error term}, \quad j\to\infty,
\label{z9}
\end{equation}
where $b_1,\dots,b_L\in\bbC$ and 
$\zeta_1,\dots,\zeta_L\in\bbT$.
Sequences of the type \eqref{z90} are required in the proof of Theorems~\ref{thm.b1} and \ref{RR}, 
while sequences of the type \eqref{z9} are required in the proof of the more general Theorems~\ref{nonAnalytic}
and \ref{Analytic}, which pertain to the functions $\omega$ with several ($=L$) singularities on the unit circle.
\end{itemize}

Our construction depends on the interplay between   two representations of Hankel
operators: as $K(\omega):H^2_+\to H^2_-$ and as $\Gamma(h):\ell^2\to\ell^2$. 
From the technical point of view, we only have to relate the class of Hankel operators $K(\omega)$, 
where the symbol $\omega$ has finitely many ($=L$) logarithmic singularities on $\bbT$, to the 
class of Hankel operators $\Gamma(h)$, where $h$ is of the form \eqref{z9}. This requires a rather careful analysis
of the Fourier coefficients of such functions $\omega$. We show that
every singularity of $\omega$ generates one of the  terms in the right-hand side of \eqref{z9}. 
This result is stated as Theorem~\ref{lma.b1}.

\subsection{The structure of the paper}
In Section~\ref{sec.back} we recall some background information related to the theory of Hankel operators
and to the AAK theorem. 
In Section~3 we state our main results, Theorems~\ref{nonAnalytic}
and \ref{Analytic}, which are extensions of Theorems~\ref{thm.b1} and \ref{RR}.
In the same Section, 
we deduce our main results from the technical Theorem~\ref{lma.b1}, which describes the 
asymptotic behaviour of the Fourier coefficients of functions with   logarithmic singularities on $\bbT$. 
The proof of Theorem~\ref{lma.b1} is given in 
Section~4.

\section{Background information}\label{sec.back}

\subsection{Schatten classes}\label{sec.back1}
Here we briefly recall some background facts on Schatten classes; for a
detailed presentation, see,  e.g.  the book \cite{BSbook}.
 Let $\calB$ be the algebra of bounded operators on a Hilbert space $\mathcal H$, and let   $\norm{\cdot}$ be the operator norm.   
 Singular values of a compact operator $A\in\calB$ are defined by the relation 
$s_{n} (A)=\lambda_{n} (\abs{A})$, 
where $\{\lambda_{n} (\abs{A})\}_{n=0}^\infty$
is the non-increasing sequence of  eigenvalues of the compact 
non-negative operator $\abs{A}=\sqrt{A^* A}$ (with multiplicities taken into account).
Singular values may  also be defined by the relation
\begin{equation}
s_{n} (A)= \min \{\| A- B\| : B\in \calB, \quad \rank B\leq n\}, \quad n=0,1, \ldots. 
\label{eq:rank}
\end{equation}

For $p>0$, the Schatten class $\Sch_p$ and the weak  Schatten class $\Sch_{p,\infty}$ of compact operators  
are defined by the conditions 
\begin{align*}
A\in\Sch_p 
\quad &\Leftrightarrow \quad 
\sum_{n=0}^\infty s_n(A)^p<\infty,
\\
A\in\Sch_{p,\infty} 
\quad &\Leftrightarrow \quad 
\sup_{n\geq0} (n+1)^{1/p}s_n(A)
<\infty.
\end{align*}
Of course, we have $\Sch_p \subset \Sch_{p,\infty}$.

\subsection{Relations between $\rho_n(\omega)$, $\rho_n^+(\omega)$, $\rho_n^-(\omega)$}

Recall that $\calR_n$ consists of all rational functions with at most $n$ poles, including the pole at infinity,
but with no poles on the unit circle $\bbT$; the poles are counted with 
multiplicities taken into account.
Thus, $r\in\calR_n$ if and only if
$$
r(z)= p (z)+ \sum_{|z_{j}|\neq 1}\sum_{\ell=1}^{L_{j}} c_{j,\ell} (z-z_{j})^{-\ell},
$$
where $p$ is a polynomial and 
$$
\deg r =\deg p+ L_{1}+ L_{2}+\cdots \leq n. 
$$
Hence the functions $r_{+}= P_{+} r\in \calR^{+}_n$ and $r_-=P_- r\in \calR^{-}_n$ are given by 
\begin{equation}
r_+(z)=p (z)+ \sum_{|z_{j}|>1}\sum_{\ell=1}^{L_{j}} c_{j,\ell} (z-z_{j})^{-\ell},
\quad
r_-(z)=\sum_{|z_{j}|<1}\sum_{\ell=1}^{L_{j}} c_{j,\ell} (z-z_{j})^{-\ell}.
\label{eq:ratii}
\end{equation}
The following simple relations between the distances 
$\rho_n(\omega)$, $\rho_n^+(\omega)$ and $\rho_n^-(\omega)$ (see \eqref{eq:DR}) will be useful.

\begin{lemma}\label{lma.z1}
For any $\omega\in\VMO$ and any $n\geq0$, we have the relation
\begin{equation}
\rho_n^+(\overline{\omega}) = \rho_n^-(\omega).
\label{z1}
\end{equation}
Moreover, formula \eqref{eq:N-T+} holds true.
\end{lemma}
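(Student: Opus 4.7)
The plan is to establish \eqref{z1} first by direct Fourier computation, and then to deduce \eqref{eq:N-T+} from \eqref{z1} together with the already-stated formula \eqref{eq:N-T} by a conjugation argument.

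For \eqref{z1}, I would fix $r_+\in\calR_n^+$, let $h := P_+\overline{\omega}-r_+ \in H^2_+$, and try to re-express $\|h\|_{\BMO}$ via \eqref{z7}. The two ingredients needed are: (i) the direct Fourier identity
\[
\overline{P_+\overline{\omega}} \;=\; \widehat{\omega}(0) + P_-\omega \qquad\text{on } \bbT,
\]
which follows from $\overline{\mu}=\mu^{-1}$ on $\bbT$; and (ii) a partial-fraction dictionary for conjugation of rational functions: using $\mu^{-1}-\overline{z_j} = -\overline{z_j}(\mu-\tilde z_j)/\mu$ with $\tilde z_j = 1/\overline{z_j}$ and a binomial expansion on each term of \eqref{eq:ratii}, I would show that the map $r_+\mapsto(c,\rho_-)$ defined by $\overline{r_+}|_\bbT = c+\rho_-$, with $c\in\bbC$ and $\rho_-\in\calR_n^-$, is a \emph{bijection} $\calR_n^+\to\bbC\times\calR_n^-$ (the degree is preserved because each pole of $r_+$ at $z_j$ yields exactly one pole of $\overline{r_+}$ at $1/\overline{z_j}$, and the polynomial part yields a pole at the origin). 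Inserting (i) and (ii) into \eqref{z7} applied to $h$ should yield
\[
\|h\|_{\BMO} \;=\; \max\bigl\{\|P_-\omega-\rho_-\|_{\BMO},\;|\widehat{\omega}(0)-c|\bigr\},
\]
and since $c$ and $\rho_-$ vary independently, the second term is killed by taking $c=\widehat{\omega}(0)$, leaving $\inf_{\rho_-\in\calR_n^-}\|P_-\omega-\rho_-\|_{\BMO} = \rho_n^-(\omega)$.

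For \eqref{eq:N-T+}, substituting $\overline{\omega}$ for $\omega$ in the just-proved \eqref{z1} gives $\rho_n^+(\omega)=\rho_n^-(\overline{\omega})$. Combining this with \eqref{eq:N-T} applied to $\overline{\omega}$ and the conjugation-invariance of the $L^\infty$-norm yields
\[
\rho_n^+(\omega) \;=\; \dist_{L^\infty}\{\overline{\omega},\,\calR_n^-+H^2_+\} \;=\; \dist_{L^\infty}\{\omega,\,\overline{\calR_n^-}+\overline{H^2_+}\}.
\]
The same partial-fraction calculation as above, now applied to $r_-\in\calR_n^-$, shows $\overline{\calR_n^-}\subset\calR_n^+$; and $\overline{H^2_+}=H^2_-\oplus\bbC$. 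Since $\bbC\subset\calR_n^+$, the constants are absorbed into the $\calR_n^+$-summand, and the right-hand side becomes $\calR_n^++H^2_-$, as claimed.

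The main bookkeeping obstacle I anticipate is the slight asymmetry between $H^2_+$ (which contains the constants) and $H^2_-$ (which does not), reflected both in the form \eqref{eq:ratii} of $\calR_n^\pm$ and in the definition \eqref{eq:BMBM} of the BMO-norm via \eqref{z7}. Keeping track of the ``free constant'' $c$ in $\calR_n^+$ and matching it cleanly with the term $|\widehat{f}(0)|$ in \eqref{z7} is the only delicate point; once that matching is verified, the rest is a routine application of the fact that conjugation on $\bbT$ maps rational functions to rational functions of the same degree, interchanging poles inside and outside the unit disk.
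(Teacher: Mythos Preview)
Your proposal is correct and follows essentially the same route as the paper. The paper packages the conjugation step more abstractly---writing $\calR_n^+=\calR_n^+(0)+\bbC$ and invoking the identity $\overline{\calR_n^+(0)}=\calR_n^-$ without further comment---whereas you spell out the partial-fraction mechanism behind this identity; but the logical skeleton (apply \eqref{z7}, use that conjugation on $\bbT$ sets up a bijection between $\calR_n^+$ and $\bbC\times\calR_n^-$, then kill the free constant by choosing $c=\wh\omega(0)$) is identical, and your derivation of \eqref{eq:N-T+} from \eqref{z1} and \eqref{eq:N-T} matches the paper's line for line.
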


\begin{proof} 
Put $H^2_{+} (0)= \{ f\in  H^2_{+} : \wh f (0)=0\}$ and $\calR^{+}_n (0)= \{ r\in  \calR^{+}_n : \wh r (0)=0\}$. Then
$
\ov{H^2_{+} (0)}= H^2_-, \quad \ov{\calR^{+}_n(0)}= \calR^{-}_n
$
and $H^2_{+} =H^2_{+} (0)+ {\bbC}$, $\calR^{+}_n =\calR^{+}_n  (0)+ {\bbC}$.
Therefore, by  the definition \eqref{eq:DR} of $\rho_n^+$,  we have
$$
\rho_n^+(\overline{\omega})
=
\min\{\norm{P_+ (\overline{\omega}-r_{+}-r_{0})}_{\BMO}: r_{+}\in\calR^{+}_n(0), r_{0}\in {\bbC}\}
$$
 which in view of the relation \eqref{z7} yields
$$
\rho_n^+(\overline{\omega})
=
\max\bigl\{\min\{\norm{P_- \omega- r_-   }_{\BMO}: 
r_-\in\calR^{-}_n\}, \min\{    \abs{\wh\omega(0)-r_0} : 
 r_0\in\bbC\}\bigr\} .
$$
Choosing $r_0=\wh\omega(0)$, we see that the right-hand side here equals $\rho_n^-(\omega)$.

Putting together relations \eqref{eq:N-T} and \eqref{z1} and passing to the complex conjugation, we see that
$$
\rho_n^+ (\omega)=\dist_{L^\infty}\{\overline{\omega} , \calR^{-}_n +H^2_+\}=\dist_{L^\infty}\{ \omega , \overline{\calR^{-}_n} +\overline{H^2_+}\}.
$$
Since
$$
 \overline{\calR^{-}_n} +\overline{H^2_+}=  \calR^{+}_n (0) +\overline{H^2_{+} (0)+ {\bbC}}=  \calR^{+}_n (0) + H^2_- + {\bbC},
$$
we obtain formula \eqref{eq:N-T+}.
\end{proof} 

\begin{lemma}
For any $\omega\in\VMO$ and any $n\geq0$, we have the relation
\begin{equation}
\rho_n(\omega)
=
\min\bigl\{\max\{\rho_{n_+}^+(\omega),\rho_{n_-}^-(\omega)\}: n_++n_-=n\bigr\}.
\label{z1a}
\end{equation}
\end{lemma}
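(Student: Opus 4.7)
Plan: The argument combines the decomposition of $r\in\calR_n$ given in \eqref{eq:ratii} with the norm identity \eqref{z6}. Any $r\in\calR_n$ splits uniquely as $r = r_+ + r_-$ with $r_\pm = P_\pm r\in\calR_{n_\pm}^\pm$, where $n_\pm := \deg r_\pm$. Since by \eqref{eq:ratii} the function $r_+$ has its poles in $\{\abs{z}>1\}$ (together with a polynomial part absorbed into $r_+$) while $r_-$ has its poles in $\{\abs{z}<1\}$, the two pole sets are strictly separated by $\bbT$, so they do not interact and the degrees add: $\deg r = n_+ + n_- \leq n$. Conversely, for any $r_+\in\calR_{n_+}^+$ and $r_-\in\calR_{n_-}^-$ with $n_+ + n_- \leq n$, the sum $r_+ + r_-$ lies in $\calR_n$ for the same reason.

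Applying \eqref{z6} to $f=\omega-r$ and using $P_\pm(\omega-r) = P_\pm\omega - r_\pm$ gives
$$
\norm{\omega-r}_{\BMO} = \max\bigl\{\norm{P_+\omega - r_+}_{\BMO},\ \norm{P_-\omega - r_-}_{\BMO}\bigr\}.
$$
The two entries of the maximum depend on $r_+$ and $r_-$ separately, so the joint minimization decouples. Using the identification $\rho_{n_\pm}^\pm(\omega) = \dist_{\BMO}\{P_\pm\omega,\calR_{n_\pm}^\pm\}$ built into the definition \eqref{eq:DR}, one obtains
$$
\min_{r_\pm\in\calR_{n_\pm}^\pm}\norm{\omega-r}_{\BMO} = \max\bigl\{\rho_{n_+}^+(\omega),\ \rho_{n_-}^-(\omega)\bigr\}.
$$
Minimizing over admissible pairs $(n_+,n_-)$ with $n_+ + n_- \leq n$ then yields the formula \eqref{z1a} but with $\leq n$ in place of $=n$.

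The final step is to tighten the constraint to $n_+ + n_- = n$. Since $\calR_k^\pm\subset\calR_{k+1}^\pm$, the sequences $\{\rho_k^\pm(\omega)\}_k$ are non-increasing, so if the optimum is attained at some $(n_+,n_-)$ with $n_+ + n_- < n$, one may transfer the slack $n - n_+ - n_-$ to either index without increasing either $\rho^\pm$, hence without increasing the maximum. I do not anticipate a genuinely hard step here; the only point requiring a little care is the clean degree additivity $\deg r = \deg r_+ + \deg r_-$, which rests on the disjointness of the inside and outside pole sets in \eqref{eq:ratii}.
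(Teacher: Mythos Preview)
The proposal is correct and follows essentially the same approach as the paper: decompose $r\in\calR_n$ as $r_++r_-$ via \eqref{eq:ratii}, apply the norm identity \eqref{z6}, and minimize the two terms independently. Your handling of the constraint $n_++n_-\leq n$ versus $n_++n_-=n$ is slightly more explicit than the paper's (which simply absorbs the slack into the choice of $n_+$, $n_-$ from the start), but the substance of the argument is identical.
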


\begin{proof} 
For $r\in \calR_n$, denote $r_\pm=P_\pm r$. 
From \eqref{eq:ratii},
it is easy to see that $r_+\in\calR^{+}_{n_+}$ and $r_-\in\calR^{-}_{n_-}$, where
$n_++n_-=n$. 
Conversely, if $r_\pm\in\calR^{\pm}_{n_\pm}$, then $r=r_++r_-\in\calR_n$
with $n=n_++n_-$. 
By the identity \eqref{z6}, we have
$$
\norm{\omega-r}_{\BMO}
=
\max\{\norm{P_+\omega-r_+}_{\BMO},\norm{P_-\omega-r_-}_{\BMO}\}.
$$
It follows that 
$$
\rho_n(\omega)
=
\min\bigl\{\max\{\norm{P_+\omega-r_+}_{\BMO},\norm{P_-\omega-r_-}_{\BMO}\}:
r_\pm\in\calR_{n_\pm}^{\pm}, n_++n_-=n\bigr\};
$$
the right-hand side here coincides with the right-hand side in   \eqref{z1a}.
\end{proof}

It will be convenient to rewrite  \eqref{z1a} in terms of the following counting functions:  
\begin{equation}
\nu(\omega;s)=\#\{n\geq0: \rho_n(\omega)>s\}, 
\quad
\nu^\pm(\omega;s)=\#\{n\geq0: \rho_n^\pm(\omega)>s\}, 
\label{nu}
\end{equation}
where $s>0$.

\begin{lemma}\label{CF}
For any $\omega\in\VMO$ and any $s >0$, the relation 
\begin{equation}
\nu(\omega;s)=\nu^+(\omega;s)+\nu^-(\omega;s)
\label{z1b}
\end{equation}
holds true.  
\end{lemma}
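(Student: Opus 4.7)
The plan is to reduce the identity \eqref{z1b} to the previous lemma's identity \eqref{z1a} by passing from the distances to the counting functions. The key point is monotonicity: since $\calR_n\subset\calR_{n+1}$ and $\calR_n^{\pm}\subset\calR_{n+1}^{\pm}$, each of the three sequences $\{\rho_n(\omega)\}$, $\{\rho_n^+(\omega)\}$ and $\{\rho_n^-(\omega)\}$ is non-increasing in $n$. The assumption $\omega\in\VMO$ ensures that all three sequences tend to zero (this also uses Lemma~\ref{lma.z1} for the $\rho_n^+$ case, since $\omega\in\VMO$ implies $\overline{\omega}\in\VMO$). Consequently, for every $s>0$,
$$
\nu(\omega;s)=\min\{n\geq0:\rho_n(\omega)\leq s\},\qquad \nu^\pm(\omega;s)=\min\{n\geq0:\rho_n^\pm(\omega)\leq s\}.
$$

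Next, I would read off from identity \eqref{z1a} that, for a given $n\geq0$, the inequality $\rho_n(\omega)\leq s$ holds if and only if there exists a decomposition $n=n_++n_-$ with $n_\pm\geq0$ such that simultaneously $\rho_{n_+}^+(\omega)\leq s$ and $\rho_{n_-}^-(\omega)\leq s$. By monotonicity of the one-sided sequences, this last pair of inequalities is equivalent to $n_+\geq\nu^+(\omega;s)$ and $n_-\geq\nu^-(\omega;s)$. Hence the smallest $n$ admitting such a decomposition equals $\nu^+(\omega;s)+\nu^-(\omega;s)$. Combining this with the reformulation of $\nu(\omega;s)$ in the previous paragraph gives \eqref{z1b}.

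I do not expect any serious obstacle: the proof is essentially a bookkeeping translation between distances and counting functions, and the only non-trivial input is the already established identity \eqref{z1a}. The one point to handle carefully is the reduction from the $>s$ definition in \eqref{nu} to the $\leq s$ characterisation, but this is immediate from the non-increasing nature of the sequences.
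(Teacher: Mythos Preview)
Your proposal is correct and follows essentially the same route as the paper's own proof: both arguments use monotonicity to pass from the distances to the counting functions via the equivalence $\rho_n(\omega)\leq s \Leftrightarrow \nu(\omega;s)\leq n$ (equivalently, $\nu(\omega;s)=\min\{n:\rho_n(\omega)\leq s\}$), and then read off from \eqref{z1a} that $\rho_n(\omega)\leq s$ holds precisely when $n\geq\nu^+(\omega;s)+\nu^-(\omega;s)$. The paper's version is marginally slicker in that it works directly with the equivalence $\rho_n\leq s\Leftrightarrow\nu\leq n$ rather than the $\min$ reformulation, which avoids having to invoke $\rho_n\to0$ explicitly, but the substance is identical.
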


\begin{proof}
Fix $s>0$. 
Observe that $\rho_n(\omega)\leq s$ is equivalent to 
$\nu(\omega;s)\leq n$, and similarly for $\rho_n^\pm(\omega)$. 
By \eqref{z1a}, for any $n\geq0$, the relation $\rho_n(\omega)\leq s$ is equivalent to 
$$
\exists n_+, n_-: \quad n_++n_-=n, \quad \rho_{n_+}^+(\omega)\leq s \text{ and }  \rho_{n_-}^-(\omega)\leq s.
$$
This can be rewritten as
$$
\exists n_+, n_-: \quad n_++n_-=n, \quad \nu^+(\omega;s)\leq n_+ \text{ and }  \nu^-(\omega;s)\leq n_-,
$$
which is equivalent to $\nu^+(\omega;s)+\nu^-(\omega;s)\leq n$. 
We have proven that $\nu(\omega;s)\leq n$ is equivalent to $\nu^+(\omega;s)+\nu^-(\omega;s)\leq n$; 
thus, we get \eqref{z1b}.  
\end{proof}
 
\subsection{Hankel operators on Hardy spaces}

Here we recall several fundamental results of the theory of Hankel operators. 
The first proposition below is Nehari's theorem \cite{Nehari}, which we
combine  for convenience with the compactness result due to P.~Hartman \cite{Hartman}. 
  
\begin{proposition}\cite[Theorems~1.1.3 and 1.5.8]{Peller}\label{Nehari}
Suppose that $\omega\in L^2 $. Then the Hankel operator $K (\omega): H^2_+ \to H^2_-$   
 is bounded \emph{(\emph{resp. compact})} if and only if
 $P_{-}\omega\in \BMO$ \emph{(\emph{resp.  $P_{-}\omega\in \VMO$})}. Moreover,
 \begin{equation}
\| K(\omega)\| = \| P_{-} \omega\|_{ \BMO }.
\label{eq:BMO1}
\end{equation} 
\end{proposition}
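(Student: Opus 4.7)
The plan is to follow the classical Nehari--Hartman route in four steps. Since $K(\omega)=K(P_-\omega)$, I may assume throughout that $\omega\in H^2_-$, in which case $\|\omega\|_{\BMO}$ is given by \eqref{eq:BMO}. The starting point is the matrix-element identity
$$
\langle K(\omega)f,g\rangle_{L^2}=\langle\omega f,g\rangle_{L^2}=\int_{\bbT}\omega f\bar g\,dm,\qquad f\in H^2_+,\ g\in H^2_-,
$$
together with two Fourier-coefficient facts: the product $f\bar g$ lies in $H^1_+(0)$, and $\int g_+h\,dm=0$ whenever $g_+\in H^2_+$ and $h\in H^1_+(0)$.

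From these observations the upper bound $\|K(\omega)\|\le\|\omega\|_{\BMO}$ is immediate: for any $g_+\in H^2_+$ with $\omega-g_+\in L^\infty$,
$$
|\langle K(\omega)f,g\rangle|=\Bigl|\int(\omega-g_+)f\bar g\,dm\Bigr|\le\|\omega-g_+\|_{L^\infty}\|f\|_{L^2}\|g\|_{L^2},
$$
and taking the infimum over $g_+$ produces the estimate via \eqref{eq:BMO}. In particular $P_-\omega\in\BMO$ is already sufficient for boundedness of $K(\omega)$.

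The reverse inequality $\|\omega\|_{\BMO}\le\|K(\omega)\|$ is the main obstacle and the heart of Nehari's theorem. The linear functional $L(h):=\int\omega h\,dm$, defined a priori only on products $h=f\bar g$, must be recognised as bounded on all of $H^1_+(0)$ with norm at most $\|K(\omega)\|$. The essential analytic ingredient is the F.~Riesz $H^1$-factorisation: every $h\in H^1_+(0)$ admits a representation $h=f\bar g$ with $f\in H^2_+$, $g\in H^2_-$, and $\|h\|_{H^1}=\|f\|_{L^2}\|g\|_{L^2}$, obtained through inner--outer factorisation and the integrability of $\log|h|$. With this in hand, Hahn--Banach extends $L$ to a functional on $L^1(\bbT)$ of the same norm, represented by some $\phi\in L^\infty$ with $\|\phi\|_{L^\infty}\le\|K(\omega)\|$; the vanishing $\int(\omega-\phi)h\,dm=0$ on $H^1_+(0)$ then forces $\omega-\phi\in H^2_+$ by a Fourier-coefficient check, so that $\|\omega\|_{\BMO}\le\|\phi\|_{L^\infty}\le\|K(\omega)\|$. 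This simultaneously yields the necessity of $P_-\omega\in\BMO$ for boundedness and the norm identity \eqref{eq:BMO1}.

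Hartman's compactness criterion then follows from a density argument built on the norm identity. For the sufficiency, analytic polynomials $p=\sum_{j=-N}^{-1}c_j\mu^j$ yield Hankel operators of rank at most $N$; combined with $\|K(\omega)-K(p)\|=\|P_-\omega-p\|_{\BMO}$ and the fact that for $\omega\in H^2_-$ membership in $\VMO$ is equivalent to being a $\BMO$-limit of such polynomials (via Ces\`aro sums of continuous boundary values), this exhibits $K(\omega)$ as a norm limit of finite-rank operators whenever $P_-\omega\in\VMO$, hence compact. For the converse, consider the Fourier truncations $p_N=\sum_{j=-N}^{-1}\wh\omega(j)\mu^j$; using the strong-to-norm convergence $K(\omega)Q_N\to 0$ for the projection $Q_N$ onto $\Span\{\mu^j:j\ge N\}\subset H^2_+$ (and its adjoint analogue in $H^2_-$), one verifies $\|K(\omega-p_N)\|\to 0$, and the norm identity then gives $\|P_-\omega-p_N\|_{\BMO}\to 0$, whence $P_-\omega\in\VMO$.
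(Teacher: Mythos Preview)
The paper does not prove this proposition at all; it is simply quoted from Peller's book (Theorems 1.1.3 and 1.5.8). So there is no ``paper's proof'' to compare against, and your sketch stands on its own. Your outline of Nehari's theorem --- the upper bound via $f\bar g\in H^1_+(0)$, the lower bound via Riesz factorisation and Hahn--Banach --- is exactly the classical argument and is fine. The sufficiency direction of Hartman's theorem (VMO $\Rightarrow$ compact) is also correctly sketched.

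There is, however, a genuine gap in your necessity argument for Hartman's theorem. You claim that $\|K(\omega-p_N)\|\to 0$ for the Fourier truncations $p_N=\sum_{j=-N}^{-1}\wh\omega(j)\mu^j$, and you justify this by ``$K(\omega)Q_N\to 0$ and its adjoint analogue''. But in the $\ell^2$ picture $K(\omega-p_N)$ is the Hankel matrix $\Gamma(g)$ with $g(m)=h(m)\1_{[m\ge N]}$, and this is \emph{not} dominated by $\Gamma(h)R_N$ and $R_N\Gamma(h)$: writing $\Gamma(g)=\Gamma(h)R_N+R_N\Gamma(h)(I-R_N)+B$ leaves an uncontrolled corner block $B$ supported on $\{j,k<N,\ j+k\ge N\}$, and there is no obvious bound on $\|B\|$ from $\|R_N\Gamma(h)\|$ alone. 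The standard argument avoids this entirely by using the \emph{shift} rather than the projection: from compactness one gets $\|K(\omega)S^N\|=\|K(\mu^N\omega)\|\to 0$ (via the Hankel intertwining $K(\omega)S=S_-^*K(\omega)$), and then Nehari produces $f_N\in H^\infty$ with $\|\mu^N\omega-f_N\|_{L^\infty}\to 0$. Since $\mu^{-N}f_N$ splits as a polynomial in $\bar\mu$ plus an $H^\infty$ function, this yields $\|\omega-q_N\|_{\BMO}\to 0$ for \emph{some} polynomials $q_N$ --- in general different from the Fourier partial sums --- and hence $P_-\omega\in\VMO$. Replacing your $Q_N$ step with this shift argument closes the gap.
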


In view of  Proposition~\ref{Nehari}, the  definition \eqref{eq:BMBM} of the BMO norm can be rewritten as
$$
\norm{\omega}_{ \BMO } =\max\{ \norm{K(\omega)}, \norm{K(\overline{\omega})}, \abs{\wh{\omega} (0)}\}.
$$

The Kronecker theorem   describes all finite rank Hankel operators.  

\begin{proposition}
A Hankel operator $K(\omega)$ has rank $n$ if and only if $P_{-}\omega\in {\calR}_{n}$
\emph{(\emph{equivalently, if and only if $P_{-}\omega\in {\calR}_n^{-}$})}. 
\end{proposition}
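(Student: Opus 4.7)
The plan is to prove both directions using the correspondence between $K(\omega)$ and the Hankel matrix $\Gamma(h)$ with $h(j) = \wh{\omega}(-1-j)$, combined with the observation that $K(\omega) = K(P_-\omega)$, which lets me reduce to $\omega = P_-\omega \in H^2_-$ and write $\omega(z) = \sum_{j \geq 0} h(j) z^{-1-j}$.

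For the sufficiency direction, partial fractions decomposes any $\omega \in \calR_n^-$ as a linear combination of building blocks $(z-z_0)^{-\ell}$ with $|z_0|<1$, whose total order of poles is at most $n$. For such a block and for $f \in H^2_+$, the function $f(z)/(z-z_0)^\ell$ is meromorphic in the open unit disk with a single pole of order $\leq \ell$ at $z_0$; subtracting its principal part leaves a function in $H^2_+$, so $P_-\bigl(f(z)/(z-z_0)^\ell\bigr)$ lies in $\Span\{(z-z_0)^{-j}\}_{j=1}^{\ell}$. Hence $\rank K\bigl((z-z_0)^{-\ell}\bigr) \leq \ell$, and by linearity $\rank K(\omega) \leq n$.

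For the necessity direction, assume $\rank K(\omega) \leq n$. By the unitary equivalence $\Gamma(h) = \mathcal{U}_-^* K(\omega) \mathcal{U}_+$, the columns $h_j = (h(j+k))_{k \geq 0}$ of the Hankel matrix lie in a common subspace of $\ell^2$ of dimension $\leq n$. The first $n+1$ columns $h_0, \dots, h_n$ are therefore linearly dependent: there exist scalars $(a_0, \dots, a_n) \neq 0$ with $\sum_{k=0}^n a_k h(j+k) = 0$ for every $j \geq 0$. Letting $n' \leq n$ be the largest index with $a_{n'} \neq 0$, this linear recurrence forces the generating function $F(w) = \sum_{j=0}^\infty h(j) w^j$ to be rational of the form $Q(w)/P(w)$, with $P(w) = \sum_{k=0}^{n'} a_k w^{n'-k}$ of degree $n'$ and $Q$ of degree $<n'$ determined by the initial values. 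Translating back, $\omega(z) = z^{-1} F(1/z)$ is rational of degree $\leq n$, and since $\omega \in H^2_-$ it has no poles on $\bbT$ or at $\infty$, giving $\omega \in \calR_n^-$.

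The main technical point that requires care is the derivation of the recurrence: while linear dependence of $h_0, \dots, h_n$ in $\ell^2$ is immediate from the rank bound, one must check that the resulting relation $\sum_k a_k h(j+k) = 0$ holds \emph{for every} $j \geq 0$ and not merely asymptotically. This is secured by reading the vector identity in $\ell^2$ componentwise, but it is the conceptual heart of the argument. A secondary bookkeeping issue is making sure that the degree of $P$ after the change of variable $w \mapsto 1/z$, together with the absence of poles of $\omega$ on $\bbT$ and at infinity, yields exactly the bound $\omega \in \calR_n^-$ rather than something weaker.
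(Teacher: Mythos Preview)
The paper does not prove this proposition; it is quoted as the classical Kronecker theorem, with the proof deferred to standard references. Your argument is a correct and essentially standard proof of the result in the form ``$\rank K(\omega)\leq n$ if and only if $P_-\omega\in\calR_n^-$'', which is the form actually used later in the paper.

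Two small comments. First, in the necessity direction you assert that $P(w)=\sum_{k=0}^{n'}a_k w^{n'-k}$ has degree $n'$; this would require $a_0\neq0$, which you have not arranged. What you actually have is $P(0)=a_{n'}\neq0$, and it is this that matters: after the substitution $w=1/z$, the polynomial $p(z)=z^{n'}P(1/z)=\sum_k a_k z^k$ has degree exactly $n'$, and multiplying $\omega(z)=\sum_{j\geq0}h(j)z^{-1-j}$ by $p(z)$ the recurrence kills every coefficient of $z^{-1-m}$ for $m\geq0$, leaving a polynomial $q(z)$ of degree $\leq n'-1$. Thus $\omega=q/p\in\calR_{n'}^-\subset\calR_n^-$, with the absence of poles on $\bbT$ following from $\omega\in L^2(\bbT)$. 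Second, what you flag as the ``main technical point''---that the $\ell^2$ identity $\sum_k a_k h_k=0$ yields $\sum_k a_k h(j+k)=0$ for every $j$---is immediate (equality in $\ell^2$ is coordinatewise equality); the real content is the passage from the recurrence to rationality, which you carry out correctly.
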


 The Adamyan-Arov-Krein theorem states that, for Hankel operators, the minimum in \eqref{eq:rank} can be   taken   over Hankel operators only. We denote by $\mathcal{K}$ the set of all bounded Hankel operators.

\begin{proposition}\label{AAK}\cite[Theorem 0.1]{AAK}
Let  $K $ be a compact Hankel operator. Then 
$$
s_{n} (K)= \min \{\| K-G\| : G\in \mathcal{K}, \rank G\leq n\}, \quad n=0,1, \ldots. 
$$
\end{proposition}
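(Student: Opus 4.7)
The inequality
\[
s_n(K) \leq \min\{\|K - G\| : G \in \mathcal{K},\ \rank G \leq n\}
\]
is immediate from the general variational formula \eqref{eq:rank}, since the right-hand side is the minimum of $\|K-B\|$ over a subset of the bounded operators $B\in\calB$ of rank $\leq n$. The content of the proposition is the reverse inequality: one must exhibit a Hankel operator $G_\star$ of rank $\leq n$ with $\|K - G_\star\| \leq s_n(K)$.

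My plan is to reformulate the reverse inequality as an approximation problem for symbols. By Proposition~\ref{Nehari}, write $K = K(\omega)$ with $P_-\omega \in \VMO$. By Kronecker's theorem, a Hankel operator $K(\sigma)$ has rank $\leq n$ if and only if $P_-\sigma \in \calR_n^-$, and \eqref{eq:BMO1} combined with \eqref{eq:BMO} gives
\[
\|K(\omega)-K(\sigma)\| = \|P_-(\omega-\sigma)\|_{\BMO} = \inf_{h\in H^2_+}\|\omega - \sigma - h\|_{L^\infty(\bbT)}.
\]
Hence it is enough to construct $\phi \in L^\infty(\bbT)$ with $\|\phi\|_{L^\infty} \leq s_n(K(\omega))$ and $P_-(\omega - \phi) \in \calR_n^-$; for such $\phi$, the operator $G_\star := K(\omega - \phi)$ is Hankel, has rank $\leq n$ by Kronecker, and satisfies $\|K(\omega) - G_\star\| = \|K(\phi)\| \leq \|\phi\|_{L^\infty}$.

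The function $\phi$ is to be manufactured from a Schmidt pair for $K(\omega)$ at level $\sigma_n := s_n(K(\omega))$: unit vectors $\xi \in H^2_+$, $\eta \in H^2_-$ satisfying $K(\omega)\xi = \sigma_n \eta$ and $K(\omega)^*\eta = \sigma_n \xi$. The key structural lemma, which uses decisively the Hankel-specific covariance relating $K(\omega)$ and $K(\omega)^*$ with multiplication by $\mu$, asserts that a suitable rational combination of $\xi$ and $\eta$ produces a function $\phi$ of constant modulus $\sigma_n$ on $\bbT$; equivalently, one needs $|\xi(\mu)| = |\eta(\mu)|$ for a.e.\ $\mu \in \bbT$. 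The Schmidt equation then translates into an inclusion of the type $(\omega - \phi)\xi \in H^2_+$. Performing the inner--outer factorization $\xi = \xi_i \xi_o$ and using the outer-ness of $\xi_o$, this inclusion places $P_-(\omega - \phi)$ into a space of rational functions controlled by the zeros of $\xi_i$ inside the unit disk; a dimension count, exploiting the fact that the spectral subspace of $K(\omega)^*K(\omega)$ strictly above $\sigma_n^2$ has dimension at most $n$, forces $\xi_i$ to be a finite Blaschke product of degree $\leq n$, so that indeed $P_-(\omega - \phi) \in \calR_n^-$.

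The main obstacle is precisely the unimodularity identity $|\xi| = |\eta|$ a.e.\ on $\bbT$, together with the accompanying degree bound on the inner factor $\xi_i$. A generic compact operator enjoys no such rigidity of its Schmidt pairs: it is the Hankel structure that forces the ratio $\bar\mu\,\eta/\xi$ to be of constant modulus on $\bbT$ and that couples the inner factor of a Schmidt vector to the top part of the singular value decomposition of $K(\omega)$. This rigidity is the genuine analytic content of the AAK theorem; everything else in the plan is formal manipulation via Nehari and Kronecker.
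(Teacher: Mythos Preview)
The paper does not prove this proposition at all: it is quoted as Theorem~0.1 of \cite{AAK} and used as a black box in Section~\ref{sec.back}. There is therefore no paper proof to compare against.

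Your outline is the classical Adamyan--Arov--Krein argument and is correct in its architecture, but as you yourself say, it is a plan rather than a proof. The two assertions you flag as ``the main obstacle'' --- the a.e.\ identity $|\xi|=|\eta|$ on $\bbT$ for a Schmidt pair, and the bound $\deg\xi_i\le n$ on the inner factor --- are exactly the substance of the theorem, and you have not supplied arguments for either. The first requires exploiting the shift covariance $K(\omega)S=S^{*}K(\omega)$ (with $S$ the unilateral shift on $H^2_+$) to show that the Schmidt subspace at level $\sigma_n$ is invariant under a suitable model operator, and then invoking a Beurling-type description to extract the constant-modulus ratio $\phi=\sigma_n\,\bar\mu\,\eta/\xi$. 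The second --- the degree count --- is more delicate than a single sentence suggests: one must handle the possibility that $\sigma_n$ has multiplicity, and the actual argument produces a Hankel approximant of rank equal to the number of singular values strictly above $\sigma_n$ (which is $\le n$), typically by showing that the orthogonal complement of $\xi_i H^2_+$ embeds into the span of the Schmidt vectors for singular values $>\sigma_n$. Until those two steps are written out, what you have is an accurate roadmap to the AAK proof, not the proof itself.
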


Combining  Kronecker   and Adamyan-Arov-Krein theorems and taking into account relation \eqref{eq:BMO1}
and Lemma~\ref{lma.z1}, one obtains the following result
(which is essentially Theorem 0.2 in \cite{AAK}).

\begin{proposition}\label{AAKXm}
Let  $\omega\in \VMO$. Then for all $n\geq0$,
$$
\rho_n^+(\omega)=s_{n}(K(\overline{\omega})),
\quad
\rho_n^-(\omega)=s_{n}(K(\omega)).
$$
\end{proposition}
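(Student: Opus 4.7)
The plan is to prove the second identity $\rho_n^-(\omega) = s_n(K(\omega))$ directly from the AAK and Kronecker theorems (Proposition~\ref{AAK} and the preceding proposition) together with Nehari's theorem (Proposition~\ref{Nehari}), and then obtain the first identity for free by combining this with Lemma~\ref{lma.z1}.

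First I would observe that since $\omega\in\VMO$, the projection $P_-\omega$ lies in $\VMO$, so $K(\omega)=K(P_-\omega)$ is compact by Hartman's theorem. This makes Proposition~\ref{AAK} applicable, giving
\begin{equation*}
s_n(K(\omega))=\min\{\|K(\omega)-G\|:G\in\calK,\ \rank G\leq n\}.
\end{equation*}
Next, I would use Kronecker's theorem to parametrize the admissible operators $G$. A bounded Hankel operator has rank at most $n$ if and only if it is of the form $K(r)$ for some $r\in\calR_n^-$; conversely, every such $K(r)$ has rank at most $n$. Hence
\begin{equation*}
s_n(K(\omega))=\min\{\|K(\omega)-K(r)\|:r\in\calR_n^-\}=\min\{\|K(P_-\omega-r)\|:r\in\calR_n^-\},
\end{equation*}
where I used the linearity of the map $\psi\mapsto K(\psi)$ together with $K(\psi)=K(P_-\psi)$. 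Now apply Nehari's identity \eqref{eq:BMO1}: for any $r\in\calR_n^-\subset H^2_-$ the function $P_-\omega-r$ belongs to $H^2_-$ and $\|K(P_-\omega-r)\|=\|P_-\omega-r\|_{\BMO}$. The right-hand side, minimized over $r\in\calR_n^-$, is exactly $\rho_n^-(\omega)$ by the definition \eqref{eq:DR}. This establishes $\rho_n^-(\omega)=s_n(K(\omega))$.

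For the first identity, the cleanest route is to invoke Lemma~\ref{lma.z1}: replacing $\omega$ by $\overline\omega$ in the relation $\rho_n^+(\overline\omega)=\rho_n^-(\omega)$ yields $\rho_n^+(\omega)=\rho_n^-(\overline\omega)$, and the second identity applied to $\overline\omega$ gives $\rho_n^-(\overline\omega)=s_n(K(\overline\omega))$.

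There is no serious obstacle here; the argument is essentially a bookkeeping exercise combining three classical theorems. The only point that deserves care is the correspondence between $\{G\in\calK:\rank G\leq n\}$ and $\calR_n^-$: one must check that different symbols $r\in\calR_n^-$ can produce the same Hankel operator (of course $K(r)=K(r')$ iff $P_-(r-r')=0$, but since $r,r'\in H^2_-$ we need only note that this forces $r=r'$), so the parametrization is actually a bijection and the minimum over $G$ really does coincide with the minimum over $r\in\calR_n^-$.
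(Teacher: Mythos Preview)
Your proof is correct and follows exactly the route indicated in the paper: the paper states (without writing out the details) that the proposition is obtained by combining the Kronecker and Adamyan--Arov--Krein theorems with Nehari's identity \eqref{eq:BMO1} and Lemma~\ref{lma.z1}, and this is precisely what you do. The only cosmetic point is that you might note $\overline{\omega}\in\VMO$ (immediate from the conjugation-invariance of the $\BMO$-norm) before applying the second identity to $\overline{\omega}$.
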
 

Thus, the problem of rational approximation of a function $\omega\in \VMO$ is equivalent 
to the study of the singular values of the corresponding Hankel operator.

\subsection{Schatten class properties of Hankel operators}
Here we recall   important results due to V.~Peller that characterise Hankel operators of 
Schatten classes. Some partial results in this direction were independently obtained by S.~Semmes in 
\cite{Semmes} and by A.~A.~Pekarski\u{\i} in
\cite{Pekarski1}.
Definitions of the Besov class $B^{1/p}_{p,p}$ and the Besov-Lorentz classe $\mathfrak B^{1/p}_{p,\infty}$
are given in the Appendix; further relevant information can be found in Peller's book 
\cite{Peller}. 
We will not need the three propositions below in our construction, and they are given here
only in order to put our results into the right context. 

\begin{proposition}\cite[Corollaries~6.1.2, 6.2.2 and 6.3.2]{Peller}
 Let  $\omega\in L^2  $ and $p>0$. Then  the Hankel operator $K (\omega)$ belongs to the Schatten class $\Sch_p$   if and only if
 $P_{-}\omega\in B^{1/ p}_{p,p} $.
\end{proposition}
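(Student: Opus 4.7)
My plan is to combine Proposition~\ref{AAKXm} with a rational-approximation characterisation of the Besov space $B^{1/p}_{p,p}$. The first step is purely formal: Proposition~\ref{AAKXm} gives $s_n(K(\omega))=\rho_n^-(\omega)$, so that
$$
K(\omega)\in\Sch_p\iff \sum_{n\geq 0}\rho_n^-(\omega)^p<\infty .
$$
The substance of the theorem therefore lies in proving the equivalence
$$
P_-\omega\in B^{1/p}_{p,p}\iff \sum_{n\geq 0}\rho_n^-(\omega)^p<\infty ,
$$
which is a statement about rational approximation in the $\BMO$-norm (essentially Pekarski\u{\i}'s characterisation of Besov spaces via rational approximation).

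For the sufficient direction, I would start from the Littlewood--Paley description $\|f\|_{B^{1/p}_{p,p}}^p\asymp\sum_k 2^k\|\Delta_k f\|_{L^p}^p$ and decompose $P_-\omega=\sum_k f_k$ with $f_k=\Delta_k P_-\omega$. Since $f_k$ is frequency-localised at scale $2^k$, it can be approximated by a rational function $r_k\in\calR^-_{c\cdot 2^k}$ with $\|f_k-r_k\|_{\BMO}$ suitably controlled by $\|f_k\|_{L^p}$ and $2^k$ (the construction uses standard rational approximation of frequency-localised atoms). Assembling the partial sums $\sum_{k\leq K}r_k\in\calR^-_{C\cdot 2^K}$ and estimating the tail in $\BMO$ gives a bound of the form $\rho_{C\cdot 2^K}^-(\omega)^p\lesssim\sum_{k>K}2^k\|f_k\|_{L^p}^p$; summing in $K$ then yields $\sum_n\rho_n^-(\omega)^p\lesssim\|P_-\omega\|_{B^{1/p}_{p,p}}^p$. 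For the necessary direction, I would take AAK optimal approximants $r_n\in\calR_n^-$ with $\|P_-\omega-r_n\|_{\BMO}=s_n(K(\omega))$ and telescope: the differences $g_k=r_{2^{k+1}}-r_{2^k}\in\calR^-_{3\cdot 2^k}$ satisfy $\|g_k\|_{\BMO}\leq 2 s_{2^k}(K(\omega))$ and $P_-\omega = r_1 + \sum_k g_k$. The key input is then a reverse Bernstein-type inequality: every $r\in\calR_n^-$ satisfies $\|r\|_{B^{1/p}_{p,p}}\lesssim n^{1/p}\|r\|_{\BMO}$. Applying this to each $g_k$ and summing reconstructs $\|P_-\omega\|_{B^{1/p}_{p,p}}^p\lesssim\sum_k 2^k s_{2^k}(K(\omega))^p\asymp\sum_n s_n(K(\omega))^p$.

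The chief difficulty is this reverse Bernstein inequality in the range $0<p<1$, where $B^{1/p}_{p,p}$ is only quasi-Banach and the standard Fourier-analytic, duality, and interpolation arguments all break down. One must instead produce an atomic decomposition of $B^{1/p}_{p,p}$ into rational atoms with controlled pole structure; this delicate construction is where the original papers of Peller and of Pekarski\u{\i}--Semmes do their essential work and where the genuine content of the theorem is concentrated.
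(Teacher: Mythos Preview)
The paper does not prove this proposition: it is stated as a citation from Peller's book (the bracketed reference points to the specific corollaries there), and the text immediately preceding it says explicitly ``We will not need the three propositions below in our construction, and they are given here only in order to put our results into the right context.'' So there is no proof in the paper to compare your attempt against.

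On your outline itself: the first step via Proposition~\ref{AAKXm} is not a genuine reduction. The equivalence
\[
\sum_{n\ge0}\rho_n^-(\omega)^p<\infty \iff P_-\omega\in B^{1/p}_{p,p}
\]
is, in the literature, normally \emph{deduced from} Peller's theorem combined with AAK, not used to prove it; after your first step you have simply restated the target in approximation-theoretic language. What follows is then the actual argument, and your sketch is in the right spirit and correctly isolates the hard case $0<p<1$. Note, however, that in Peller's original proof for $0<p\le1$ the mechanism is different from the reverse Bernstein inequality you propose: one works directly with the Hankel matrix and builds a $B^{1/p}_{p,p}$-atomic decomposition of the symbol out of the Schmidt pairs of $K(\omega)$, rather than passing through rational approximants and a Bernstein-type bound in $\BMO$. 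Your route is closer to Pekarski\u{\i}'s approach; it can be made to work, but the reverse Bernstein step $\|r\|_{B^{1/p}_{p,p}}\lesssim n^{1/p}\|r\|_{\BMO}$ for $r\in\calR_n^-$ is itself a substantial theorem and would need to be supplied independently.
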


In view of Proposition~\ref{AAK}, this  implies the following result on rational approximation in the $\BMO$ norm.

\begin{proposition}\cite[Theorem~6.6.1]{Peller}
Let $\omega\in \VMO$ and $p>0$. 
Then  the condition
$$
\bigl\{\dist_{\BMO}\{\omega ,\calR_{n}\}\bigr\}_{n=0}^\infty
\in \ell^p({\bbZ}_{+})
$$
is satisfied if and only if
 $ \omega\in B^ {1/p}_{p,p}$.
\end{proposition}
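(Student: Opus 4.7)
The plan is to bootstrap from the preceding Schatten-class characterisation of Hankel operators by using the Adamyan-Arov-Krein identification of singular values with the one-sided distances $\rho_n^\pm(\omega)$ (Proposition~\ref{AAKXm}), and then to patch the two one-sided distances back into $\rho_n(\omega)$ via the counting-function identity of Lemma~\ref{CF}.

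First, I would rewrite the $\ell^p$ condition in terms of the counting functions defined in \eqref{nu}. Since the sequences $\{\rho_n(\omega)\}$ and $\{\rho_n^\pm(\omega)\}$ are all non-increasing in $n$, the standard layer-cake identity gives
$$
\sum_{n=0}^\infty \rho_n(\omega)^p = p\int_0^\infty s^{p-1} \nu(\omega;s)\,ds,
$$
and similarly for $\rho_n^\pm(\omega)$ in terms of $\nu^\pm(\omega;s)$. The identity $\nu(\omega;s)=\nu^+(\omega;s)+\nu^-(\omega;s)$ from Lemma~\ref{CF} therefore immediately yields
$$
\{\rho_n(\omega)\}_{n\geq 0}\in\ell^p \quad\Longleftrightarrow\quad \{\rho_n^+(\omega)\}_{n\geq 0}\in\ell^p \text{ and } \{\rho_n^-(\omega)\}_{n\geq 0}\in\ell^p.
$$

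Second, Proposition~\ref{AAKXm} gives $\rho_n^-(\omega)=s_n(K(\omega))$ and $\rho_n^+(\omega)=s_n(K(\overline{\omega}))$, so the two $\ell^p$ conditions above are equivalent to $K(\omega)\in\Sch_p$ and $K(\overline{\omega})\in\Sch_p$. The previous proposition then translates these into $P_-\omega\in B^{1/p}_{p,p}$ and $P_-\overline{\omega}\in B^{1/p}_{p,p}$, respectively.

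Third, I would identify the conjunction of these two conditions with the single condition $\omega\in B^{1/p}_{p,p}$. This relies on two standard facts about the Besov class on $\bbT$: it is invariant under complex conjugation, and the Riesz projections $P_\pm$ act boundedly on it. Combining these,
$$
\omega \in B^{1/p}_{p,p} \;\Longleftrightarrow\; P_+\omega,\,P_-\omega \in B^{1/p}_{p,p} \;\Longleftrightarrow\; P_-\overline{\omega},\,P_-\omega \in B^{1/p}_{p,p},
$$
modulo the constant term $\wh\omega(0)$, which is harmless. Chaining the three equivalences yields the claim. The main obstacle is the last step, specifically the case $0<p<1$ where the boundedness of the Riesz projection on $B^{1/p}_{p,p}$ is non-trivial; I would cite the corresponding result from Peller's book rather than reprove it.
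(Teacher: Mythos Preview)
The paper does not give its own proof of this proposition; it is quoted from Peller's book, preceded only by the one-line remark ``In view of Proposition~\ref{AAK}, this implies the following result.'' Your proposal is a correct and complete expansion of exactly that implication, carried out with the paper's own machinery (Lemma~\ref{CF} and Proposition~\ref{AAKXm}) to pass between the two-sided distance $\rho_n$ and the one-sided distances $\rho_n^\pm$. One small comment: the obstacle you flag at the end --- boundedness of the Riesz projection on $B^{1/p}_{p,p}$ for $0<p<1$ --- is not really an obstacle with the dyadic definition given in the Appendix, since the blocks $f_n$ for $n>0$ and $n<0$ already separate positive and negative Fourier frequencies, so $P_\pm$ act by simply discarding half of the terms in the defining sum.
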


Using real interpolation, Peller has also obtained the   ``weak version'' of this result
(see \cite[Section 6.4]{Peller}). 

\begin{proposition}
 Let  $\omega\in \VMO$ and $p>0$. 
Then the condition 
$$
\dist_{\BMO}\{\omega ,\calR_{n}\}=O(n^{-1/p}), \quad n\to\infty,
$$
is satisfied if and only if
$\omega\in \mathfrak{B}^{1/ p}_{p,\infty}$.
\end{proposition}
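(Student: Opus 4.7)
The plan is to reduce the statement to a singular-value assertion for Hankel operators and then obtain the weak-type result by real interpolation from the strong (Schatten-class) version, which is the proposition immediately preceding the one to be proved.

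First I would translate the distance condition into a Hankel operator condition. By Proposition~\ref{AAKXm}, $\rho_n^-(\omega)=s_n(K(\omega))$ and $\rho_n^+(\omega)=s_n(K(\overline{\omega}))$, so in the notation \eqref{nu} the counting function $\nu^-(\omega;s)$ equals the distribution function $n_s(K(\omega)):=\#\{n\geq 0: s_n(K(\omega))>s\}$ of singular values, and similarly for $\nu^+$. Combined with Lemma~\ref{CF}, this gives
$$
\nu(\omega;s)=n_s(K(\omega))+n_s(K(\overline{\omega})),\qquad s>0.
$$
Since $\rho_n(\omega)=O(n^{-1/p})$ is equivalent to $\nu(\omega;s)=O(s^{-p})$, the approximation condition is equivalent to
$$
K(\omega)\in\Sch_{p,\infty}\quad\text{and}\quad K(\overline{\omega})\in\Sch_{p,\infty}.
$$
Recalling that $K(\omega)$ depends only on $P_-\omega$, and that $K(\overline{\omega})$ depends only on $P_-\overline{\omega}$, while $\omega\in\mathfrak{B}^{1/p}_{p,\infty}$ is (as one should check by inspection of the definition in the Appendix) equivalent to $P_\pm\omega\in\mathfrak{B}^{1/p}_{p,\infty}$, it suffices to prove the Hankel-operator statement: for any $\psi\in\VMO\cap H^2_-$,
$$
K(\psi)\in\Sch_{p,\infty}\iff \psi\in\mathfrak{B}^{1/p}_{p,\infty}.
$$

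For this last equivalence I would use the Peller-type strong result recalled just above, namely $K(\psi)\in\Sch_q\Leftrightarrow\psi\in B^{1/q}_{q,q}$, and apply the real interpolation method with parameters $(\theta,\infty)$. Choose $0<q_0<p<q_1<\infty$ and $\theta\in(0,1)$ with $1/p=(1-\theta)/q_0+\theta/q_1$. On the operator side, the classical identification of interpolation spaces between Schatten ideals gives
$$
(\Sch_{q_0},\Sch_{q_1})_{\theta,\infty}=\Sch_{p,\infty}.
$$
On the function side, the Besov-Lorentz class is essentially defined so that
$$
(B^{1/q_0}_{q_0,q_0},B^{1/q_1}_{q_1,q_1})_{\theta,\infty}=\mathfrak{B}^{1/p}_{p,\infty};
$$
this is the content of the definition of $\mathfrak{B}^{1/p}_{p,\infty}$ in the Appendix (it is precisely the Lorentz-type refinement of the Besov scale). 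Since the map $\psi\mapsto K(\psi)$ is a bounded linear map between each pair of endpoint spaces, the interpolation functor transports the strong result to the desired weak-type equivalence.

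The main technical obstacle is to justify the two interpolation identifications rigorously and to verify that $K(\cdot)$ is indeed an ``isomorphism of interpolation scales'' in the required sense, so that the equivalence (not merely one-sided bounds) is preserved under the $(\theta,\infty)$ method. The Schatten side is standard, but the Besov-Lorentz side requires care: one has to confirm that the definition adopted in the Appendix matches the $(\theta,\infty)$-real-interpolation realization between two Besov spaces $B^{1/q_0}_{q_0,q_0}$ and $B^{1/q_1}_{q_1,q_1}$. Once this is set up, the argument is purely functorial, and no further analysis of Fourier coefficients or rational approximations is needed.
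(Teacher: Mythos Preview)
The paper does not itself prove this proposition; it merely records it as Peller's result and says explicitly that it is obtained ``using real interpolation'' from the preceding Schatten-class characterisation. Your proposal follows exactly this route, and the preliminary reduction via Proposition~\ref{AAKXm} and Lemma~\ref{CF} is correct, so your outline matches the intended argument; the technical caveats you flag (identifying $\mathfrak{B}^{1/p}_{p,\infty}$ as the $(\theta,\infty)$-interpolation space between two Besov endpoints, and ensuring the \emph{equivalence} rather than a one-sided bound survives interpolation) are genuine and are precisely the points handled in Peller's book.
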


\subsection{Hankel operators in $\ell^2$}

Here we state the result of \cite{LOC} on the asymptotics of singular values of Hankel operators $\Gamma (h)$.
First we need some notation. 
For a sequence $g=\{g(j)\}_{j=0}^\infty$, we define iteratively the sequences
$g^{(m)}=\{g^{(m)}(j)\}_{j=0}^\infty$, $m=0,1,2,\dots$, by setting $g^{(0)}(j)=g(j)$  
and 
$$
g^{(m+1)}(j)=g^{(m)}(j+1)-g^{(m)}(j), 
\quad j\geq0.
$$

\begin{theorem}\cite[Theorem~3.1]{LOC}\label{thm.a4}
Let $\alpha>0$, let $\zeta_1, \zeta_2,\dots,\zeta_L\in\bbT$ be distinct numbers, and let 
${b}_{1}, {  b}_2,\dots, {  b}_L\in\bbC$.
Let $h$ be a sequence of complex numbers such that
\begin{equation}
h(j)=\sum_{\ell=1}^L \bigl( {  b}_\ell j^{-1}(\log j)^{-\alpha}+ g_\ell(j)\bigr)\zeta_\ell^{j+1},
\quad
j\geq2,
\label{a13}
\end{equation}
where the error terms $g_\ell$, $\ell=1,\ldots, L$, obey the estimates
\begin{equation}
g_\ell^{(m)}(j)
=o(j^{-1-m}(\log j)^{-\alpha}), 
\quad j\to\infty, 
\label{a5a}
\end{equation}
for all   $m=0,1,\dots $.
Then  the Hankel
operator $\Gamma (h)$ defined by formula \eqref{eq:a5}   is compact in $\ell^2$, and its singular values  
  satisfy the asymptotic relation
\begin{equation}
s_n(\Gamma (h))= {  a} \, n^{-\alpha}+o(n^{-\alpha}), 
\quad
{ a} =\varkappa(\alpha)\Bigl(\sum_{\ell=1}^L \abs{{  b}_\ell}^{1/\alpha}\Bigr)^\alpha,
\label{z0c}
\end{equation}
as $n\to\infty$, 
where
the coefficient 
$\varkappa(\alpha)$ is given by formula \eqref{a7}.
\end{theorem}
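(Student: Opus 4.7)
The plan is to reduce the statement to a single canonical model and then recombine the pieces using standard perturbation theory for singular values. The main tools are the Ky Fan inequality $s_{m+n}(A+B)\leq s_m(A)+s_n(B)$, Schatten-ideal characterisations of Hankel operators in terms of smoothness (in the spirit of Peller's theorem), and unitary invariance of singular values.

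As a first step I would absorb the error sequences $g_\ell$. The hypothesis \eqref{a5a} asserts that the finite differences $g_\ell^{(m)}$ decay faster than $j^{-1-m}(\log j)^{-\alpha}$ for arbitrarily high order $m$, which is precisely the kind of smoothness that places the corresponding Hankel operator in a Schatten class $\Sch_{p,\infty}$ strictly better than $\Sch_{1/\alpha,\infty}$. Consequently $s_n(\Gamma(g_\ell\zeta_\ell^{j+1}))=o(n^{-\alpha})$, and Ky Fan shows that these tail contributions do not affect \eqref{z0c}. Second, the factor $\zeta^{j+1}$ can be removed: the modulation $h(j)\mapsto \zeta^{j+1}h(j)$ corresponds on the symbol side to the rotation $\omega(\mu)\mapsto \omega(\bar\zeta\mu)$ on $\bbT$, a unitary change of variable that preserves the singular value sequence. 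So each single-singularity contribution reduces to the case $\zeta=1$.

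The heart of the argument is therefore the model case $L=1$, $\zeta=1$, $b=1$, i.e.\ $h_0(j)=j^{-1}(\log j)^{-\alpha}$. Here I would exploit an integral representation: under a logarithmic change of variable $j\sim e^t$, the Hankel quadratic form $\sum h_0(j+k)u(j)\overline{v(k)}$ is asymptotically equivalent to a convolution-type integral operator on $(0,\infty)$ whose symbol can be written down explicitly. The singular values of such a limiting object are governed by an $L^{1/\alpha}$-quasi-norm of its symbol, and a Beta-function integral identifies this quasi-norm with the constant $\varkappa(\alpha)$ of \eqref{a7}. Extending from $L=1$ to $L>1$ requires a quasi-orthogonality argument: the oscillations $\zeta_\ell^j$ concentrate the contributions from different $\ell$ on asymptotically orthogonal subspaces of $\ell^2$, and a counting-function manipulation analogous to Lemma~\ref{CF} produces the $\ell^{1/\alpha}$-combination of the $\abs{b_\ell}$ in \eqref{z0c}.

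The principal obstacle is the model-case computation: setting up the correct diagonalising transform, controlling the discrete-to-continuous approximation with errors uniform in $n$, and pinning down the precise constant $\varkappa(\alpha)$. The quasi-orthogonality argument for $L>1$ is a close second, since one needs a matching lower bound on $s_n$ to rule out destructive interference among the different singularities, not merely the upper bound provided by Ky Fan.
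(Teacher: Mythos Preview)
This theorem is not proven in the present paper: it is quoted verbatim from the authors' earlier work \cite[Theorem~3.1]{LOC} and used here as a black box. There is therefore no ``paper's own proof'' to compare your proposal against; the paper's contribution lies in the Fourier-analytic Theorem~\ref{lma.b1} that \emph{feeds into} Theorem~\ref{thm.a4}, not in re-deriving it.

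That said, your outline is broadly consistent with how such singular-value asymptotics are obtained (perturbation via Ky Fan, reduction to a continuous model, a localisation/quasi-orthogonality principle to separate the singularities $\zeta_\ell$), and you correctly flag the two genuinely hard steps. A small cautionary remark on step~2: the modulation $h(j)\mapsto\zeta^{j+1}h(j)$ does preserve singular values for a \emph{single} term, but you cannot use it to normalise all $\zeta_\ell$ to $1$ simultaneously, so the quasi-orthogonality argument for $L>1$ is not a mere afterthought but essential. In any case, since the present paper treats Theorem~\ref{thm.a4} as an external input, your sketch goes beyond what is required here; no proof is expected in this context.
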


\begin{remark*}
In fact, it suffices to require condition \eqref{a5a} for $0\leq m\leq M(\alpha)$, where 
$M(\alpha)$ is an explicit finite number. 
\end{remark*}

\section{Main results}

The structure of this section is as  follows. 
First, we state a technical Theorem~\ref{lma.b1}, which gives the asymptotics of the Fourier coefficients
for functions $\omega$ with  logarithmic singularities on the unit circle. 
The proof of this theorem will be provided in the next section. 
Then, using this theorem, we prove Theorem~\ref{HT}, which  
yields the asymptotics of the singular values for Hankel operators $K(\omega)$ with
$\omega$ as above. 
Finally, we state and prove our main results (Theorems~\ref{nonAnalytic} and \ref{Analytic}) on rational approximation
of such functions $\omega$. 
They are obtained as simple corollaries of Theorem~\ref{HT}. 

\subsection{Fourier coefficients of singular functions}

Here we consider functions $\omega(\mu)$ which are smooth on the unit circle except at the point $\mu=1$, where 
$\omega(\mu)$ have logarithmic singularities. 
These singularities will be slightly more general than those of the ``model functions'' $\omega_0$, $\omega_\pm$
of Section~\ref{sec.z}
(see \eqref{z10}, \eqref{z11}) and will contain additional functional parameters.

As in Section~\ref{sec.z}, we fix an even function 
$\chi_0\in C^\infty(\bbR)$  satisfying the condition
\begin{equation}
\chi_0(\theta)=
\begin{cases}
1& \text{for $\abs{\theta}\leq c_1$,}
\\
0& \text{for  $\abs{\theta}\geq c$,}
\end{cases}
\label{d7}
\end{equation}
where $0<c_1<c $ are sufficiently small numbers (we will be more specific below). 
First let us informally discuss the structure of an admissible singularity of $\omega$ 
at the point $\mu=1$ of the unit circle. 
Below the index $\sigma$ takes values $+$ and $-$ and $\1_\sigma$ denotes the characteristic
function of the semi-axis $\bbR_\sigma$. The more general version of $\omega_0$ (see \eqref{z10}) 
is the function
$$
\sum_{\sigma=\pm} v_{0,\sigma}(\theta)(-\log\abs{\theta}+u_{0,\sigma}(\theta))^{1-\alpha}\1_\sigma(\theta)\chi_0(\theta) 
$$ 
where $v_{0,\sigma}$ and $u_{0,\sigma}$ are arbitrary complex valued $C^\infty$   functions such that
\begin{equation}
 v_{0,+}  (0) =v_{0,-}  (0)=:v_{0} .
 \label{eq:bbx}
\end{equation}
Similarly, the generalisation of the linear combination of  $\omega_+$ and $\omega_-$ (see \eqref{z11}) is 
$$
\sum_{\sigma=\pm} v_{1,\sigma}(\theta)(-\log\abs{\theta}+u_{1,\sigma}(\theta))^{-\alpha}\1_\sigma(\theta)\chi_0(\theta) 
$$ 
with some $C^\infty$   functions $v_{1,\sigma}$ and $u_{1,\sigma}$. 
Below we combine these two expressions more succinctly as a sum of four terms. 
More precisely, we introduce the following assumption.

\begin{assumption}\label{AsLog}
Let $\alpha>0$, and let $v_{j, \sigma}(\theta)$ and $u_{j, \sigma}(\theta)$, $j=0,1$, $\sigma=\pm$, be complex valued $C^\infty$ functions of $\theta\in \bbR$ such that
condition \eqref{eq:bbx} is satisfied.
Then the function $\omega$ is defined by  the relation
\begin{equation}
\omega(e^{i\theta})
=  
\sum_{j=0,1}\sum_{\sigma=\pm} 
v_{j, \sigma} (\theta)(-\log\abs{\theta} +  u_{j, \sigma }(\theta))^{1 -j-\alpha}\1_\sigma(\theta) \chi_0(\theta), 
\quad \theta\in(-\pi,\pi].
\label{b4cx}
\end{equation}
\end{assumption}

Here $c_{2}$ is chosen so small   that $\theta=0$ is the only singularity of  the functions in the sum \eqref{b4cx},
that is, 
$$
-\log\abs{\theta} +  u_{j, \sigma }(\theta)\neq 0  \quad \text{ if } \quad \theta\in [-c_{2}, c_{2}]  
$$
for $ j=0,1$, $\sigma=\pm$. 
The function $z^{j-\alpha}$ for $z=-\log\abs{\theta} +  u_{j,\sigma} (\theta)$ 
in \eqref{b4cx} is defined by the principal branch, $z^{j-\alpha}=e^{(j-\alpha)\log z}$, where 
we assume that 
$$
\arg (-\log\abs{\theta} +  u_{j,\sigma} (\theta))\to 0 \quad \text{ as }  \quad \theta\to 0
$$
for all these functions.

For a function $\omega$ satisfying Assumption~\ref{AsLog}, we put
\begin{equation}
b=b(\omega)=
 (1-\alpha)v_{0} \bigl(  \tfrac12 - \tfrac {1}{2 \pi i }  (u_{0,+ } (0)- u_{0,- } (0)) \bigr)
 -
 \tfrac {1}{2 \pi i }  (v_{1,+}  (0) - v_{1,-} (0) ) .
\label{b4ab}
\end{equation}

The analytic core of our construction is the following theorem. 
\begin{theorem}\label{lma.b1}
Under Assumption~$\ref{AsLog}$,  the Fourier coefficients   of $\omega (\mu)$ admit  the   representation
\begin{equation}
\wh\omega (- j)
=
b  j^{-1}(\log j)^{-\alpha}+g(- j), \quad j\geq 2,
\label{b5}
\end{equation}
where the coefficient $b=b(\omega)$ is given by formula \eqref{b4ab} and the error term $g (-j)$ satisfies the estimates 
\begin{equation}
 g ^{(m)} (- j)=O \bigl(j^{-1-m}(\log j)^{-\alpha-1}\bigr), \quad j \to+\infty,
\label{eq:c8F}
\end{equation}
for all $m\geq0$. 
\end{theorem}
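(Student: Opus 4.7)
The plan is to compute the Fourier integral
\[
\wh\omega(-j) = \frac{1}{2\pi}\int_{-\pi}^\pi \omega(e^{i\theta}) e^{ij\theta}\, d\theta
\]
asymptotically as $j\to\infty$. Since the cutoff $\chi_0$ localises everything near $\theta=0$ and the sum in \eqref{b4cx} has only four terms indexed by $(k,\sigma)$ with $k\in\{0,1\}$ and $\sigma\in\{+,-\}$, the whole problem reduces to the asymptotics of four model integrals of the form
\[
J_\sigma(j;V,U,s) = \int_0^c V(\sigma\tau)\bigl(-\log\tau + U(\sigma\tau)\bigr)^s \chi_0(\tau) e^{i\sigma j\tau}\, d\tau,
\]
where $s = 1-k-\alpha\in\{1-\alpha,-\alpha\}$, $\sigma\in\{+1,-1\}$, and $V$, $U$ are smooth.

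For the expansion of $J_\sigma$ I would substitute $\tau = t/j$, factor out $(\log j)^s$, and Taylor-expand the remaining logarithmic and smooth factors in powers of $1/\log j$ and $1/j$. The oscillatory moments $\int_0^\infty (\log t)^p e^{i\sigma t}\,dt$ that arise are evaluated in the regularised sense from the classical identity
\[
\int_0^\infty t^{s-1} e^{i\sigma t}\, dt = \Gamma(s)\, e^{i\sigma\pi s/2}, \qquad 0 < \Re s < 1,
\]
and its derivative at $s=1$; in particular $\int_0^\infty e^{i\sigma t}\,dt = i\sigma$ and $\int_0^\infty (\log t)\, e^{i\sigma t}\, dt = -i\sigma\gamma - \pi/2$, where $\gamma$ is Euler's constant. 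This produces the two-term expansion
\[
J_\sigma(j;V,U,s) = \frac{i\sigma V(0)(\log j)^s}{j} + \frac{s V(0)(\log j)^{s-1}}{j}\bigl[\tfrac{\pi}{2} + i\sigma(\gamma + U(0))\bigr] + O\bigl(j^{-1}(\log j)^{s-2}\bigr),
\]
plus a harmless $O(j^{-2}(\log j)^s)$ contribution from the Taylor expansion of $V$.

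To extract $b$ I would sum the four contributions and divide by $2\pi$. For the $k=0$ pieces ($s=1-\alpha$) the leading $(\log j)^{1-\alpha}/j$ terms have opposite signs $\pm i v_{0,\pm}(0)$ and cancel thanks to \eqref{eq:bbx}; at the $(\log j)^{-\alpha}/j$ order the $i\gamma$ contributions from $\sigma=\pm$ also cancel, and the surviving $\pi/2$ and $U(0)$-pieces combine into $(1-\alpha)v_0\bigl[\tfrac12 - \tfrac{1}{2\pi i}(u_{0,+}(0)-u_{0,-}(0))\bigr] j^{-1}(\log j)^{-\alpha}$. The $k=1$ pieces ($s=-\alpha$) contribute directly at leading order and give $-\tfrac{1}{2\pi i}(v_{1,+}(0)-v_{1,-}(0)) j^{-1}(\log j)^{-\alpha}$. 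Summing these reproduces $b\, j^{-1}(\log j)^{-\alpha}$ as in \eqref{b4ab}, with remainder of order $j^{-1}(\log j)^{-\alpha-1}$, which is the required bound on $g(-j)$.

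The hardest part is the uniform estimate on the iterated differences $g^{(m)}(-j) = O(j^{-1-m}(\log j)^{-\alpha-1})$. Here I would use the identity $h^{(m)}(j) = \wh{\omega_m}(-j)$ where $\omega_m(e^{i\theta}) = (e^{i\theta}-1)^m\omega(e^{i\theta})$. Since $(e^{i\theta}-1)^m = (i\theta)^m + O(\theta^{m+1})$ near $\theta=0$, the rescaling $\theta = \pm t/j$ introduces an extra factor $(\pm t/j)^m$ that improves the power decay by $j^{-m}$ and replaces the moments $\int_0^\infty e^{i\sigma t}\,dt$ by $\int_0^\infty t^m e^{i\sigma t}\,dt$. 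Crucially, the symmetry $v_{0,+}(0)=v_{0,-}(0)=v_0$ is preserved, so the $(\log j)^{1-\alpha}/j^{1+m}$ contribution still cancels, and the surviving $(\log j)^{-\alpha}/j^{1+m}$ piece matches exactly the leading term $(-1)^m m!\, b\, j^{-1-m}(\log j)^{-\alpha}$ of the $m$-th discrete difference of $b j^{-1}(\log j)^{-\alpha}$ (computed either via Taylor expansion or by the Leibniz rule applied to $j^{-1}\cdot(\log j)^{-\alpha}$). Subtracting, what remains is genuinely one logarithmic order smaller in $\log j$, giving the claimed bound. The delicate point is verifying that the symmetry-driven cancellations are compatible with repeated differencing and that all error terms can be tracked uniformly in $m$.
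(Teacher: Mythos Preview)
Your strategy is sound and leads to the same result, but the execution differs from the paper's in two places, and one of them is where you would have to work hardest.

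\medskip
\textbf{Oscillatory vs.\ Laplace integrals.}
You rescale $\tau=t/j$ and invoke the regularised moments $\int_0^\infty t^{s-1}e^{i\sigma t}\,dt=\Gamma(s)e^{i\sigma\pi s/2}$ and their derivatives. These identities are correct as oscillatory integrals, but after rescaling your integrand decays only like $(\log t)^s$ on $(0,cj)$, so extending the upper limit to $\infty$ and truncating the Taylor expansion in $1/\log j$ requires a separate integration-by-parts tail estimate that you have not written down. The paper avoids this entirely: it deforms the contour from $[0,c]$ to the vertical segment $[0,i\kappa]$ (plus two pieces that are exponentially small or oscillate away after one integration by parts), converting the Fourier integral into the Laplace integral
\[
\int_0^\kappa(-\log y+a)^{-\alpha}y^m e^{-yt}\,dy,
\]
which is absolutely convergent and whose two-term expansion is elementary. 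Your formal expansion of $J_\sigma$ agrees with theirs term by term (the $\gamma$ you obtain is their $\Gamma'(1)$), but their route needs no regularisation.

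\medskip
\textbf{Differences vs.\ derivatives.}
For the bounds on $g^{(m)}$ the paper works with the continuous Fourier transform $\wh\Omega(-t)$ of the compactly supported function $\Omega$ and proves the derivative bound $G^{(m)}(-t)=O(t^{-1-m}(\log t)^{-\alpha-1})$ by computing $\int\Omega(x)x^m e^{ixt}\,dx$ with the same Laplace lemma. The discrete-difference bound then follows from the mean-value identity
\[
g^{(m)}(-j)=\int_0^1\!\!\cdots\!\int_0^1 G^{(m)}(-j+t_1+\cdots+t_m)\,dt_1\cdots dt_m,
\]
so no separate coefficient matching between $\wh{\omega_m}(-j)$ and $\Delta^m[bj^{-1}(\log j)^{-\alpha}]$ is needed. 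Your route via $\omega_m(\mu)=(\mu-1)^m\omega(\mu)$ is equally valid: since $(e^{i\theta}-1)^m=(i\theta)^m+O(\theta^{m+1})$ near $\theta=0$, the leading term of $\wh{\omega_m}(-j)$ is exactly $i^m$ times the paper's $\int\Omega(x)x^m e^{ijx}\,dx$ (up to an $O(j^{-2-m}(\log j)^{1-\alpha})$ correction), so the coefficient $(-1)^m m!\,b$ does come out and your cancellation goes through. The paper's detour through the continuous variable simply packages this matching into the single integral formula above.
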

  
We emphasize that the leading terms of the asymptotics of the Fourier coefficients of these functions
depend on the combination \eqref{b4ab} only. 

The proof of Theorem~\ref{lma.b1} will be given in the next section.

\subsection{Hankel operators with singular symbols}

Here we state a result about the singular value asymptotics for Hankel operators $K(\omega)$ with
symbols  $\omega$ having finitely many 
logarithmic singularities.

\begin{theorem}\label{HT} 
Let   $\zeta_1, \zeta_2,\dots,\zeta_L\in\bbT$ be distinct numbers, and  let the functions $\omega_{1}, \omega_2,\dots, \omega_L$ satisfy Assumption~$\ref{AsLog}$. Define the function
\begin{equation}
\omega(\mu)
= 
\sum_{\ell=1}^L \omega_{\ell}(\mu/ \zeta_{\ell}), \quad \mu\in\bbT,
\label{omega}
\end{equation}
and set
\begin{equation}
a(\omega)   =\varkappa(\alpha)\Bigl(\sum_{\ell=1}^L |b(\omega_{\ell})|^{1/\alpha}\Bigr)^\alpha,
\label{eq:sfa}
\end{equation}
where the numbers $b(\omega_\ell)$ are given by \eqref{b4ab} and $\varkappa(\alpha)$  is  the coefficient  \eqref{a7}.  
Then  the Hankel operator
$K (\omega )$ is compact and its    singular values   have the asymptotics
$$
s_n(K (\omega))= a(\omega)  \, n^{-\alpha}+o(n^{-\alpha}), 
$$
as $n\to\infty$.   
\end{theorem}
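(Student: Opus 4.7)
The plan is to translate the statement from the Hankel operator $K(\omega)$ on $H^2_+$ to the matrix Hankel operator $\Gamma(h)$ on $\ell^2$ with symbol $h(j) = \wh\omega(-1-j)$, and then apply Theorem~\ref{thm.a4}. Recall from the introductory discussion that these two operators share the same sequence of singular values.

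First I would compute the Fourier coefficients of $\omega$. Using the rotation invariance of the Haar measure $dm$ on $\bbT$, the change of variable $\nu = \mu/\zeta_\ell$ shows that
$$
\wh\omega(-1-j) = \sum_{\ell=1}^L \zeta_\ell^{\,1+j}\, \wh\omega_\ell(-1-j), \quad j \geq 0.
$$
Applying Theorem~\ref{lma.b1} to each $\omega_\ell$ yields, for $j \geq 1$,
$$
\wh\omega_\ell(-1-j) = b(\omega_\ell)\,(1+j)^{-1}(\log(1+j))^{-\alpha} + g_\ell(-1-j),
$$
where $b(\omega_\ell)$ is given by \eqref{b4ab} and the error term obeys \eqref{eq:c8F}.

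Next, to put $h$ into the exact form \eqref{a13} required by Theorem~\ref{thm.a4}, I would replace $(1+j)^{-1}(\log(1+j))^{-\alpha}$ by $j^{-1}(\log j)^{-\alpha}$. The resulting discrepancy is $O(j^{-2}(\log j)^{-\alpha})$ pointwise, and since both quantities are restrictions to $\bbN$ of smooth functions, their iterated finite differences satisfy bounds of the type \eqref{a5a}. Absorbing this discrepancy together with $g_\ell(-1-j)$ into a new error sequence $\tilde g_\ell(j)$, one obtains
$$
h(j) = \sum_{\ell=1}^L \bigl(b(\omega_\ell)\, j^{-1}(\log j)^{-\alpha} + \tilde g_\ell(j)\bigr)\zeta_\ell^{\,j+1}, \quad j \geq 2,
$$
with $\tilde g_\ell^{(m)}(j) = o(j^{-1-m}(\log j)^{-\alpha})$ for every $m \geq 0$.

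With $h$ now in the exact form \eqref{a13}, Theorem~\ref{thm.a4} applied with $b_\ell := b(\omega_\ell)$ produces the asymptotics $s_n(\Gamma(h)) = a(\omega)\, n^{-\alpha} + o(n^{-\alpha})$, where $a(\omega)$ is precisely \eqref{eq:sfa}; the identity $s_n(K(\omega)) = s_n(\Gamma(h))$ then completes the proof, and compactness of $K(\omega)$ is immediate from the decay $s_n \to 0$ (alternatively, from Hartman's theorem since $\omega \in \VMO$). The only genuinely technical point is verifying that the composite error terms $\tilde g_\ell$ satisfy the discrete-derivative estimates \eqref{a5a}: this requires combining \eqref{eq:c8F} for $g_\ell^{(m)}$ with routine calculus estimates for the smooth correction $(1+j)^{-1}(\log(1+j))^{-\alpha} - j^{-1}(\log j)^{-\alpha}$ under finite differencing, and is the only step that requires care in an otherwise clean reduction to Theorem~\ref{thm.a4}.
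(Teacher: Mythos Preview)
Your proposal is correct and follows essentially the same route as the paper: compute the Fourier coefficients via the rotation formula, invoke Theorem~\ref{lma.b1} for each $\omega_\ell$, and feed the resulting representation of $h(j)=\wh\omega(-j-1)$ into Theorem~\ref{thm.a4}. You are in fact slightly more explicit than the paper about absorbing the index shift $(j+1)\mapsto j$ into the error term, a detail the paper passes over silently.
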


\begin{proof}
Observe that for   arbitrary $\zeta\in{\bbT}$ and   $\phi \in L^1 ({\bbT})$, we have
$$
\wh{\phi_{\zeta}}(j)
= 
\wh{\phi} (j) \zeta^{-j}
\quad \text{ if } \quad 
\phi_{\zeta}(\mu)
=  
\phi (\mu/ \zeta).
$$
Therefore it follows from \eqref{omega} that 
$$
\wh\omega(-j-1)=\sum_{\ell=1}^L\wh \omega_\ell(-j-1)\zeta_\ell^{j+1}.
$$
Let $h(j)=\wh{\omega} (-j-1)$.
According to  Theorem~\ref{lma.b1}  the sequence  $h(j)$   satisfies condition \eqref{a13} as $j\to +\infty$; the corresponding asymptotic coefficients $b_{\ell}=b(\omega_\ell)$ are defined by formula \eqref{b4ab}. 
Thus Theorem~\ref{thm.a4} implies  the  asymptotic formula \eqref{z0c}    
for the singular values of the Hankel operator $\Gamma(h)$. 
Since $\Gamma(h)$ and $K(\omega)$ have the same set of singular values, we obtain the desired result. 
\end{proof}
 It is important that the singularities of the symbol \eqref{omega} are
located at distinct points $ \zeta_1,\dots, \zeta_L$.

\begin{remark}
Let $\omega$ be a function satisfying Assumption~\ref{AsLog}, but without the condition \eqref{eq:bbx}, 
i.e. with  $v_{0,+}  (0)\not=v_{0,-}(0)$;  simple examples of such function are
$$
\omega_{\pm}(e^{i\theta})
=
\abs{\log\abs{\theta}}^{1-\alpha}\1_{\pm}(\theta)\chi_0(\theta)
\quad \text{ or } \quad
\omega(e^{i\theta})
=\abs{\log\abs{\theta}}^{1-\alpha}\sign\theta\chi_0(\theta).
$$
Now the terms with $j=1$ in \eqref{b4cx} are inessential and instead of \eqref{b4ab}  we  put
$$
\wt b(\omega)=
 -
 \tfrac {1}{2 \pi i }  (v_{0,+}  (0) - v_{0,-} (0) ) .
$$

Let  $\omega$ be given by formula \eqref{omega} where each $\omega_{\ell}$ is as above. 
 Then the operator $K(\omega)$ is   compact for $\alpha>1$ only, and the asymptotics of its
singular values is of a different order:  
$$
s_n(K (\omega))= \wt a \, n^{-\gamma}+o(n^{-\gamma}), \quad \gamma=\alpha-1,
$$
where 
$$
\wt a  =\varkappa(\gamma)\Bigl(\sum_{\ell=1}^L |b(\wt \omega_{\ell})|^{1/\gamma}\Bigr)^\gamma.
$$

This fact follows from Theorem~\ref{HT} with $v_{0}=0$ and $\alpha$ replaced by $\alpha-1$.
\end{remark}

\subsection{Rational approximation}

We recall that the distances $\rho_n(\omega)$ and $\rho_n^\pm(\omega)$ are defined by relations \eqref{eq:DR}.
Our main result on rational approximation is 
\begin{theorem}\label{nonAnalytic}
Assume the hypothesis of Theorem~$\ref{HT}$ and set  
$$
a^{+}=a(\overline\omega),\quad a^{-}=a(\omega), \quad
{ a} =  \bigl( ({ a}^{+})^{1/\alpha}+({ a}^{-})^{1/\alpha}
\bigr)^\alpha.
$$
Then 
\begin{align}
\lim_{n\to\infty}n^\alpha
\rho_n^\pm(\omega)
&= {  a}^{\pm},
\label{LA1h}
\\
\lim_{n\to\infty}n^\alpha
\rho_n(\omega)
&= a.
\label{LA1H}
\end{align}
 \end{theorem}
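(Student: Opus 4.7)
The strategy is to reduce the two claimed asymptotic formulas to Theorem~\ref{HT} via the two tools already assembled in the paper: the AAK identification (Proposition~\ref{AAKXm}) converts rational-approximation distances into Hankel singular values, and Lemma~\ref{CF} additively splits the counting function for $\rho_n$ into the counting functions for $\rho_n^\pm$. Accordingly, I would first establish \eqref{LA1h}, then pass to counting functions and use Lemma~\ref{CF} to deduce \eqref{LA1H}.

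For \eqref{LA1h}, the ``minus'' case is immediate: Proposition~\ref{AAKXm} gives $\rho_n^-(\omega)=s_n(K(\omega))$, and Theorem~\ref{HT} yields $s_n(K(\omega))=a(\omega)\,n^{-\alpha}+o(n^{-\alpha})$, so $n^\alpha\rho_n^-(\omega)\to a(\omega)=a^-$. For the ``plus'' case, Proposition~\ref{AAKXm} gives $\rho_n^+(\omega)=s_n(K(\overline{\omega}))$, so I need to verify that $\overline{\omega}$ also falls under the hypothesis of Theorem~\ref{HT}. This is a routine check: conjugating \eqref{omega} and \eqref{b4cx} replaces $v_{j,\sigma}$ and $u_{j,\sigma}$ with their pointwise complex conjugates while keeping the same cutoffs $\chi_0$, the same indicator functions $\1_\sigma$, the same singular points $\zeta_\ell$, and condition \eqref{eq:bbx} on the conjugated coefficients is preserved. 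Hence Theorem~\ref{HT} applies to $\overline{\omega}$ with asymptotic coefficient $a(\overline{\omega})=a^+$, and \eqref{LA1h} follows.

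For \eqref{LA1H}, I translate the limits just proved into counting function asymptotics via the standard inversion: for a non-increasing sequence $\rho_n\to0$, the relation $n^\alpha\rho_n\to c$ is equivalent to $s^{1/\alpha}\nu(s)\to c^{1/\alpha}$ as $s\to0^+$, where $\nu(s)=\#\{n\ge0:\rho_n>s\}$. Applied to $\rho_n^\pm(\omega)$, this gives
$$
s^{1/\alpha}\nu^\pm(\omega;s)\longrightarrow (a^\pm)^{1/\alpha},\quad s\to 0^+.
$$
Adding these two limits and invoking Lemma~\ref{CF} yields
$$
s^{1/\alpha}\nu(\omega;s)\longrightarrow (a^+)^{1/\alpha}+(a^-)^{1/\alpha},\quad s\to 0^+.
$$
Applying the inversion in the opposite direction to the non-increasing sequence $\rho_n(\omega)$ gives $n^\alpha\rho_n(\omega)\to\bigl((a^+)^{1/\alpha}+(a^-)^{1/\alpha}\bigr)^\alpha=a$, which is \eqref{LA1H}.

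The only nontrivial steps are the verification that $\overline{\omega}$ satisfies Assumption~\ref{AsLog} (a bookkeeping matter about branches and conjugation of the power function) and the elementary inversion between $\rho_n\sim c n^{-\alpha}$ and $\nu(s)\sim c^{1/\alpha}s^{-1/\alpha}$, both of which are routine. I do not anticipate a genuine obstacle beyond these.
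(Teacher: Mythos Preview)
Your proposal is correct and follows essentially the same route as the paper: Proposition~\ref{AAKXm} plus Theorem~\ref{HT} for \eqref{LA1h}, then the counting-function inversion combined with Lemma~\ref{CF} for \eqref{LA1H}. You actually spell out a bit more than the paper does (the verification that $\overline{\omega}$ again satisfies the hypotheses of Theorem~\ref{HT}, and the equivalence $n^\alpha\rho_n\to c \Leftrightarrow s^{1/\alpha}\nu(s)\to c^{1/\alpha}$), both of which the paper leaves implicit.
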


\begin{proof}
To prove  \eqref{LA1h}, it suffices to put together Proposition~\ref{AAKXm} and Theorem~\ref{HT}.
In order to prove \eqref{LA1H}, we observe that \eqref{LA1h}
can be equivalently rewritten in terms of the counting functions $\nu^\pm(\omega;s)$ 
(see \eqref{nu}) as
$$
\lim_{s\to0} s^{1/\alpha}\nu^\pm(\omega;s)=({ a}^{\pm})^{1/\alpha}.
$$
It now  follows from Lemma~\ref{CF}  that
$$
\lim_{s\to0} s^{1/\alpha}\nu(\omega;s)=({  a}^{+})^{1/\alpha}+({ a}^{-})^{1/\alpha},
$$
which is equivalent to \eqref{LA1H}. 
\end{proof}

\begin{remark}\label{rmk.error}
  Theorem~\ref{HT}  automatically  extends to symbols $\omega$ that include an error term:
\begin{equation}
\omega(\mu)
= 
\sum_{\ell=1}^L \omega_{\ell}(\mu/ \zeta_{\ell})+\wt \omega(\mu), \quad \mu\in\bbT,
\label{eq:ass}\end{equation}
where $\wt\omega$ is any symbol such that
\begin{equation}
s_n(K(\wt\omega))=o(n^{-\alpha}), \quad n\to\infty.
\label{kyfan}
\end{equation}
This follows by a standard application of Ky Fan's lemma (see e.g. \cite[Section II.2.5]{GK}).
Condition \eqref{kyfan} is satisfied, for example, when $P_{-}\wt\omega\in B_{1/\alpha,1/\alpha}^\alpha$. 
Therefore  Theorem~\ref{nonAnalytic} is also true for functions \eqref{eq:ass} where $\wt\omega\in B_{1/\alpha,1/\alpha}^\alpha$. 
\end{remark}

Theorem~\ref{thm.b1} is a particular case of Theorem~\ref{nonAnalytic}, with the following choice of parameters:
$L=1$, $\zeta_1=1$, and 
$$
v_{0,\pm}(\theta)=  v_{0}, \quad 
v_{1,\pm}(\theta)=  v_{\pm},\quad 
u_{0,\pm}(\theta)=0,\quad
u_{1,\pm}(\theta)=  0.
$$
 
Formula \eqref{eq:sfa}
is quite intuitive from the viewpoint of singular value asymptotics. It means that the contributions of different singularities of  the symbol $\omega$ to the singular values counting function are independent of each other. On the other hand, this formula does not look obvious in the approximation theory framework.

\subsection{Rational  approximation of analytic functions}

Let us consider the case of $\omega(z)$ analytic in the unit disc; then $\rho_{n} (\omega)=\rho_{n}^+ (\omega)$. 
Let $u(z)$ be analytic in $\bbD$, $u\in C^\infty(\ov{\bbD})$; fix some $\zeta \in\bbT$ and
assume that 
\begin{equation}
-\log ( \zeta - z)+u (z)\neq 0, \quad z\in \ov{\bbD}.
\label{LA2a}
\end{equation}
Define
\begin{equation}
\omega (z)=\bigl(-\log ( \zeta - z)+u(z)\bigr)^{1-\alpha},\quad z \in\bbD, \quad \alpha>0.
\label{LA2}
\end{equation}
The branch of the analytic function $\log ( \zeta - z)$ is fixed by the condition 
$\log ( \zeta - z)=\log ( 1-r)+i\varphi_{0}$ if $z=r \zeta$, $r\in (0,1)$,  and $\zeta =e^{i\varphi_{0}}$. 
We fix $\arg \bigl(-\log ( \zeta - z)+ u (z) \bigr)$ by the condition that it tends to zero as $z=r e^{i\varphi_{0}}$ and $r\to 1-0$. Obviously the function $\omega(z)$ is analytic in the  unit disc $\bbD$ and is smooth up to the boundary $\bbT$, except at the point $z=\zeta$. Let us find its asymptotic behavior as $z\in \bbT$ and $z\to \zeta$.
   
\begin{lemma}\label{Anpr}
Let  $\mu =e^{i \psi}$, $ \zeta  =e^{i \psi_0}$ and $\theta:= \psi  - \psi_0 $. Then the function \eqref{LA2} admits the representation
$$
\omega (\mu)=\bigl(-\log |\theta|+u_\pm (\theta)\bigr)^{1-\alpha} , \quad \pm \theta>0,
$$
where $u_\pm$ are $C^\infty$-smooth functions, 
\begin{equation}
u_{ \pm}(\theta)= \pm i\pi/2 - i\theta/2 - \log \frac{\sin (\theta/2) }{ \theta/2 } +u  (e^{i(\psi_{0}+\theta)}) - i \psi_{0}.
\label{LA5}
\end{equation}
In particular,
\begin{equation}
u_{ +}(0) - u_{ -}(0)=  i\pi .
\label{LA5U}
\end{equation}
\end{lemma}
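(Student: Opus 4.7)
The plan is to compute $\log(\zeta - \mu)$ on the unit circle with the branch inherited from the interior of the disc, and then to split off the singular $\log|\theta|$ piece from a smooth remainder. I would begin with the algebraic identity $\zeta - \mu = e^{i\psi_0}(1 - e^{i\theta}) = e^{i\psi_0}\cdot(-2i\sin(\theta/2))e^{i\theta/2}$. This immediately yields $|\zeta - \mu| = 2|\sin(\theta/2)|$, while the argument is extracted from the factor $-2i\sin(\theta/2)$, whose sign flips with that of $\theta$. Concretely, for $\pm\theta>0$ the number $1-e^{i\theta}$ has principal argument $\theta/2\mp\pi/2$.

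Next, I would fix the branch of $\log(\zeta - z)$. Writing $\zeta - z = e^{i\psi_0}(1 - e^{-i\psi_0}z)$ and observing that $1 - e^{-i\psi_0}z$ ranges over the open disc of radius $1$ centred at $1$, which lies strictly in the right half-plane, the specified branch (normalised by $\log(\zeta-r\zeta)=\log(1-r)+i\psi_0$) coincides with the principal logarithm of $1 - e^{-i\psi_0}z$ plus $i\psi_0$. Passing to the boundary $z\to \mu = e^{i(\psi_0+\theta)}$ then gives
$$
\log(\zeta-\mu)=\log(2|\sin(\theta/2)|) + i(\theta/2 \mp \pi/2) + i\psi_0, \quad \pm\theta>0.
$$
Now I would rewrite $\log(2|\sin(\theta/2)|) = \log|\theta| + \log(\sin(\theta/2)/(\theta/2))$, which is legitimate because $\sin(\theta/2)/(\theta/2)$ is a smooth, even, strictly positive function of $\theta$ in a neighbourhood of $0$ (taking value $1$ at $0$). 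Substituting and adding $u(\mu) = u(e^{i(\psi_0+\theta)})$ yields $-\log(\zeta-\mu)+u(\mu) = -\log|\theta| + u_\pm(\theta)$ with $u_\pm$ as in \eqref{LA5}; the smoothness of $u_\pm$ is then clear. Formula \eqref{LA5U} follows by inspection at $\theta=0$, since all common terms cancel and only the $\pm i\pi/2$ survives.

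Finally, to pass from $\log$ to the power $(\cdot)^{1-\alpha}$, I would note that $-\log|\theta|+u_\pm(\theta)$ has real part tending to $+\infty$ and bounded imaginary part as $\theta\to 0^\pm$, so its argument approaches $0$; this matches the branch specification imposed on \eqref{LA2} along the radial direction, and by continuous extension from inside $\bbD$ the identity $\omega(\mu)=(-\log|\theta|+u_\pm(\theta))^{1-\alpha}$ holds. The main obstacle, as I see it, is branch bookkeeping: correctly tracking the jump $\pm i\pi/2$ as one crosses the singular point along $\bbT$, and checking that this is consistent with the branch of the power inherited from the disc via hypothesis \eqref{LA2a}. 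Everything else is direct computation using the smoothness of $u$ and of $\sin(\theta/2)/(\theta/2)$.
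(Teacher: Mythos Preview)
Your proposal is correct and follows essentially the same route as the paper: compute $\log(\zeta-\mu)$ on the boundary via $\zeta-\mu=e^{i\psi_0}(1-e^{i\theta})$, read off modulus $2|\sin(\theta/2)|$ and argument $(\theta\mp\pi)/2$ for $\pm\theta>0$, then split $\log(2|\sin(\theta/2)|)=\log|\theta|+\log\bigl(\sin(\theta/2)/(\theta/2)\bigr)$ to isolate the singular part. Your additional care with the branch of $\log(\zeta-z)$ inherited from the disc and with the passage to the power $(\cdot)^{1-\alpha}$ is more explicit than the paper's argument, but the substance is identical.
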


\begin{proof}
Observe that 
\begin{multline*}
\log(\zeta -\mu)
=
\log(e^{i\psi_0}-e^{i\psi})
=
\log(e^{i\psi_0}(1-e^{i\theta}))
\\
=
\log(1-e^{i\theta})+i\psi_0
=
 \log(2\sin |\theta/2|) + i \arg (1-e^{i\theta})+i\psi_0
\end{multline*}
and $\arg (1-e^{i\theta}) =  (\mp  \pi+\theta)/2$ for $\pm \theta>0$.
Therefore
$$
-\log ( \zeta - \mu) +u (\mu)=- \log|\theta|+ u_{\pm}(\theta)
$$
where $u_{ \pm}(\theta)$ is given by
\eqref{LA5}.
\end{proof}

Below we consider sums of   functions \eqref{LA2} with variable coefficients. Lemma~\ref{Anpr} allows us to apply Theorem~\ref{nonAnalytic} in the special case $v_{1,\pm}(\theta)=0$.

\begin{theorem}\label{Analytic}
Let   $\zeta_1, \zeta_2,\dots,\zeta_L\in\bbT$ be distinct points, and  let functions   $v_{\ell}, u_{\ell}$, $\ell=1,\ldots, L$, be analytic in     $\bbD$ and  $v_{\ell}, u_{\ell}\in C^\infty(\ov{\bbD})$, and assume that \eqref{LA2a} is satisfied for all $u_\ell$, $\zeta_\ell$.  Put
$$
\omega (z)=\sum_{\ell=1}^L v_{\ell} (z)\bigl(-\log ( \zeta_{\ell}- z)+u_{\ell} (z)\bigr)^{1-\alpha},  \quad \alpha>0 .
$$
Then there exists the limit
$$
\lim_{n\to\infty}n^\alpha
\rho_n^+(\omega)
= |1-\alpha| \varkappa(\alpha)\Bigl(\sum_{\ell=1}^L \abs{v_\ell (\zeta_\ell)} ^{1/\alpha}\Bigr)^\alpha
$$
where 
the coefficient 
$\varkappa(\alpha)$ is given by
\eqref{a7}.
\end{theorem}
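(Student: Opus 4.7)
The plan is to reduce Theorem~\ref{Analytic} to Theorem~\ref{nonAnalytic} by expressing the boundary values of $\overline\omega$ (rather than $\omega$ itself) in the form required by Theorem~\ref{HT}. Since $\omega$ is analytic in $\bbD$ with sufficient boundary regularity, $\omega\in H^2_+$, hence $K(\omega)=0$ and $\rho_n^-(\omega)=0$ by Proposition~\ref{AAKXm}, while $\rho_n^+(\omega)=s_n(K(\overline\omega))$. So it suffices to compute $a(\overline\omega)$ via Theorem~\ref{HT}.

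I would first decompose $\omega$ on $\bbT$ using a smooth partition of unity subordinate to a small neighborhood of each $\zeta_\ell$ together with a neighborhood of the set where $\omega$ is $C^\infty$. The ``diagonal'' piece near $\zeta_\ell$ is $v_\ell(\mu)(-\log(\zeta_\ell-\mu)+u_\ell(\mu))^{1-\alpha}\chi_0(\arg(\mu/\zeta_\ell))$, while the cross terms $v_k(\mu)(-\log(\zeta_k-\mu)+u_k(\mu))^{1-\alpha}$ with $k\neq\ell$ and the complementary piece all belong to $C^\infty(\bbT)$, in particular to the Besov class $B^{\alpha}_{1/\alpha,1/\alpha}$, and therefore contribute an error term covered by Remark~\ref{rmk.error}. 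Hence, after conjugation, $\overline\omega(\mu)=\sum_\ell \overline{\omega_\ell}(\mu/\zeta_\ell)+\wt\omega(\mu)$ with $\wt\omega$ negligible, and it remains to put each $\overline{\omega_\ell}$ into the format of Assumption~\ref{AsLog}.

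Next, Lemma~\ref{Anpr} applied to the $\ell$-th summand gives its boundary values as $v_\ell(\mu)(-\log|\theta|+u_{\ell,\pm}(\theta))^{1-\alpha}$ for $\pm\theta>0$, with $u_{\ell,\pm}$ smooth and satisfying $u_{\ell,+}(0)-u_{\ell,-}(0)=i\pi$. Taking complex conjugates, $\overline{\omega_\ell}$ fits Assumption~\ref{AsLog} with $j=0$ only (so $v_{1,\pm}\equiv0$), with $v_{0,\pm}(\theta)=\overline{v_\ell(\zeta_\ell e^{i\theta})}\chi_0(\theta)$, hence $v_0=\overline{v_\ell(\zeta_\ell)}$, and with $u_{0,\pm}(\theta)=\overline{u_{\ell,\pm}(\theta)}$, so that $u_{0,+}(0)-u_{0,-}(0)=-i\pi$. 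Substituting into \eqref{b4ab},
\[
b(\overline{\omega_\ell})=(1-\alpha)\overline{v_\ell(\zeta_\ell)}\Bigl(\tfrac12-\tfrac{1}{2\pi i}(-i\pi)\Bigr)=(1-\alpha)\overline{v_\ell(\zeta_\ell)}.
\]

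Plugging this into formula \eqref{eq:sfa} yields $a(\overline\omega)=|1-\alpha|\varkappa(\alpha)\bigl(\sum_\ell |v_\ell(\zeta_\ell)|^{1/\alpha}\bigr)^\alpha$, and applying the $\rho_n^+$ part of Theorem~\ref{nonAnalytic} (together with Remark~\ref{rmk.error} to absorb $\wt\omega$) delivers the claimed limit. The main technical point is the bookkeeping in the two preparatory steps: verifying that the off-diagonal cross terms and complementary smooth pieces really are in $B^{\alpha}_{1/\alpha,1/\alpha}$ so that Remark~\ref{rmk.error} applies, and keeping track of the sign in \eqref{LA5U} after conjugation, which is exactly what makes the two halves of $b(\overline{\omega_\ell})$ add constructively (rather than cancel, as they do for the un-conjugated $\omega$, consistent with $K(\omega)=0$).
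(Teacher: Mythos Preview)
Your proposal is correct and follows essentially the same route as the paper: localize via the cutoff $\chi_0$ so that each summand near $\zeta_\ell$ fits Assumption~\ref{AsLog} through Lemma~\ref{Anpr}, absorb the smooth remainders via Remark~\ref{rmk.error}, compute $b(\overline{\omega_\ell})$ from \eqref{b4ab} using $u_{0,+}(0)-u_{0,-}(0)=-i\pi$ after conjugation, and invoke Theorem~\ref{nonAnalytic}. Your extra remarks (that $K(\omega)=0$ because $\omega\in H^2_+$, and that the un-conjugated $b(\omega_\ell)$ vanishes, consistent with this) are correct and make explicit what the paper leaves implicit; the minor discrepancy that you obtain $b(\overline{\omega_\ell})=(1-\alpha)\overline{v_\ell(\zeta_\ell)}$ while the paper writes $(1-\alpha)v_\ell(\zeta_\ell)$ is immaterial since only $|b|$ enters the final formula.
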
 
\begin{proof} 
It follows from Lemma~\ref{Anpr} that the function $\omega(z)$ admits representation \eqref{eq:ass} where every function 
$\omega_{\ell}(z)$ satisfies Assumption~\ref{AsLog}  with the corresponding functions $v_{1,\pm} (\theta)=0$. Therefore according to relations \eqref{b4ab} and \eqref{LA5U} we have
$b (\overline{\omega_\ell} ) =  (1-\alpha) v_\ell (\zeta_\ell)$.
Now we can apply Theorem~\ref{nonAnalytic}; the smooth error term $\wt\omega$ does not affect the 
asymptotics --- see Remark~\ref{rmk.error}. 
\end{proof}

\section{Fourier transforms of    functions with  logarithmic singularities}

\subsection{Statement of the result}

Our goal in this section is to prove Theorem~\ref{lma.b1}.   
In fact, we prove a slightly more general statement, where Fourier coefficients 
are replaced by Fourier transforms. 
It is convenient to introduce the function of $x\in\bbR$,
\begin{equation}
\Omega(x)=
\begin{cases}
\omega(e^{ix}) & -\pi<x\leq\pi
\\
0 & \text{ otherwise.}
\end{cases}
\label{eq:Omega}
\end{equation}

\begin{theorem}\label{lma.B1C}
Under Assumption~$\ref{AsLog}$, the Fourier transform  $\wh{\Omega} (t)$ of $\Omega (x)$ is a $C^\infty$ function on $\bbR$, which can be written as
\begin{equation}
(2\pi)^{-1/2}
\wh{\Omega}(- t)
= 
{  b}  t^{-1}(\log t)^{-\alpha}
+
G (- t), \quad t>1, 
\label{b5c}
\end{equation}
where $b=b(\omega)$ is given by \eqref{b4ab}
and the error term $G(t)$ satisfies  the estimates 
\begin{equation}
G^{(m)} (- t)=O \bigl(t^{-1-m}(\log t)^{-\alpha-1}\bigr), \quad t \to+\infty,
\label{eq:c8Fc}
\end{equation}
for all $m=0,1,\ldots$. 
\end{theorem}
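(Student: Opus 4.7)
The plan is to compute the Fourier transform asymptotics of $\Omega$ by localising at the singularity $x=0$ and reducing to a finite collection of model Fourier integrals. Set $L(x):=-\log|x|$. First I would Taylor-expand the smooth functions $v_{j,\sigma}, u_{j,\sigma}$ about $x=0$ and use the binomial expansion $(L+u)^\beta = L^\beta \sum_{p\ge 0}\binom{\beta}{p}(u/L)^p$, which is valid near $x=0$ since $L(x)\to\infty$. Truncating at sufficiently high order expresses $\Omega(x)$ as a finite linear combination of model pieces $x^k L(x)^\gamma \1_\sigma(x) \chi_0(x)$, with coefficients given by explicit polynomial expressions in $v_{j,\sigma}(0), u_{j,\sigma}(0)$ and their derivatives, plus a remainder $R$ that is $C^\infty$ away from $0$ and vanishes at $0$ to arbitrary prescribed order; both $\wh R$ and all its derivatives therefore decay faster than any polynomial in $t$.

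Next I would determine the asymptotics of each model integral $I_\sigma(k,\gamma;t):=\int_{\sigma\bbR_+} x^k L(x)^\gamma \chi_0(x) e^{itx}\,dx$. The substitution $y=tx$ together with the binomial expansion of $(\log t-\log y)^\gamma$ in powers of $(\log y)/\log t$ reduces $I_\sigma$ (modulo $O(t^{-N})$ corrections) to a combination of regularised integrals $\int_0^\infty y^k(\log y)^\ell e^{\pm iy}\,dy$, which are evaluated by differentiating the analytic-continuation identity $\int_0^\infty y^{s-1}e^{\pm iy}\,dy=\Gamma(s)e^{\pm is\pi/2}$ with respect to $s$. In particular $\int_0^\infty e^{\pm iy}\,dy=\pm i$ and $\int_0^\infty (\log y)e^{\pm iy}\,dy = -\pi/2\mp i\gamma_E$, where $\gamma_E$ denotes Euler's constant. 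The $O(t^{-N})$ correction arises from integration by parts against $\chi_0'(y/t)$, whose support lies away from $0$. This gives $I_\sigma(0,\gamma;t) = \sigma i(\log t)^\gamma/t + \gamma(\pi/2+i\sigma\gamma_E)(\log t)^{\gamma-1}/t + O((\log t)^{\gamma-2}/t)$, and the factor $x^k$ for $k\ge 1$ produces an extra decay $t^{-k}$.

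The coefficient $b$ then emerges from collecting contributions at order $t^{-1}(\log t)^{-\alpha}$. The equality $v_{0,+}(0)=v_{0,-}(0)=:v_0$ from Assumption~\ref{AsLog} forces the leading $\sigma iv_0(\log t)^{1-\alpha}/t$ contributions from $v_0L^{1-\alpha}\1_\sigma\chi_0$ on the two sides to cancel; the surviving subleading correction $v_0(1-\alpha)(\pi/2+i\sigma\gamma_E)(\log t)^{-\alpha}/t$ combines across $\sigma=\pm$ into the real total $\pi v_0(1-\alpha)(\log t)^{-\alpha}/t$ (the $i\gamma_E$ imaginary parts cancel). Meanwhile the antisymmetric-in-$\sigma$ pieces $v_0(1-\alpha)u_{0,\sigma}(0)L^{-\alpha}\1_\sigma\chi_0$ and $v_{1,\sigma}(0)L^{-\alpha}\1_\sigma\chi_0$ contribute at their own leading order $iv_0(1-\alpha)(u_{0,+}(0)-u_{0,-}(0))(\log t)^{-\alpha}/t$ and $i(v_{1,+}(0)-v_{1,-}(0))(\log t)^{-\alpha}/t$, respectively. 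Summing these three terms and dividing by $2\pi$ (the $(2\pi)^{-1/2}$ normalisation) reproduces the expression \eqref{b4ab} for $b$ via the rewriting $i/(2\pi)=-1/(2\pi i)$. For the derivative bound $G^{(m)}(-t)=O(t^{-1-m}(\log t)^{-\alpha-1})$, I would apply the same analysis to $x^m\Omega$ through the identity $\frac{d^m}{dt^m}\wh\Omega(t)=(-i)^m\wh{x^m\Omega}(t)$: the function $x^m\Omega$ has the same structure as $\Omega$ with each $v_{j,\sigma}$ replaced by $x^m v_{j,\sigma}$, and the $m$-fold vanishing at $x=0$ supplies the required extra factor $t^{-m}$ in the model estimates.

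The main obstacle is the delicate algebraic interplay in identifying $b$: the real summand $(1-\alpha)v_0/2$ arises from the real $-\pi/2$ part of the subleading correction to the $L^{1-\alpha}$-piece (after cancellation of its leading contribution), while the summands involving $u_{0,\sigma}(0)$ and $v_{1,\sigma}(0)$ arise at leading order from the $L^{-\alpha}$-pieces as purely imaginary quantities that become the $1/(2\pi i)$ factors in \eqref{b4ab}. Verifying that these three contributions assemble precisely to the stated $b$ requires careful bookkeeping of many intermediate constants. A secondary technical issue is uniformity in $m$: the binomial expansion in $(\log y)/\log t$ must be carried out to sufficiently high order, and the corrections from the $\chi_0(y/t)$ cut-off controlled uniformly, so that all higher-order corrections are absorbed into the prescribed error $O(t^{-1-m}(\log t)^{-\alpha-1})$ for every $m\ge 0$.
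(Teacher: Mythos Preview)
Your proposal is correct and arrives at the right coefficient $b$, but it follows a genuinely different route from the paper's proof. The paper does \emph{not} expand $(L+u)^{1-j-\alpha}$ binomially or evaluate the model oscillatory integrals via regularised values of $\int_0^\infty y^{s-1}e^{\pm iy}\,dy$. Instead it proceeds in two steps. First, a separate lemma (Lemma~\ref{lma.d4}) shows directly, by writing $(L+u(x))^{-\alpha}=(L+u(0))^{-\alpha}(1+w(x))^{-\alpha}$ and integrating by parts, that replacing the variable factors $v_{j,\sigma}(x)$, $u_{j,\sigma}(x)$ by their values at $0$ contributes an error $O(t^{-\rho})$ for every $\rho<m+2$. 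Second, the resulting constant-coefficient integral $\int(-\log|x|+a)^{-\alpha}\1_\pm(x)\chi_0(x)\, x^m e^{ixt}\,dx$ is computed (Lemma~\ref{L}) by deforming the contour from the real segment $(0,x)$ to the imaginary segment $(0,i\kappa)$ plus two harmless pieces; this converts the Fourier integral into an \emph{absolutely convergent} Laplace integral $\int_0^\kappa(-\log y)^{-\alpha}y^m e^{-yt}\,dy$, whose two-term asymptotics is then elementary (Lemma~\ref{Lapl}). Your route through analytic continuation of $\Gamma(s)e^{\pm is\pi/2}$ is closer in spirit to the Wong--Lin reference \cite{WL} the paper cites; what the paper's contour rotation buys is that no regularisation is ever needed, since every integral encountered converges absolutely. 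Conversely, your Taylor/binomial reduction is more systematic and would more readily produce a full asymptotic expansion.

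One place in your sketch that requires genuine care is the passage from the cut-off integral $\int_0^{ct}(\log t-\log y)^\gamma\chi_0(y/t)e^{iy}\,dy$ to its regularised value: the binomial expansion of $(1-\log y/\log t)^\gamma$ is not uniformly valid over the full range (for $y$ near $0$ the ratio $|\log y|/\log t$ is unbounded, and for $y$ near $ct$ it approaches $1$), and the integrals $\int_0^\infty(\log y)^\ell e^{\pm iy}\,dy$ you invoke do not converge as ordinary integrals. The identification of the cut-off integral with its analytically continued value plus an $O(t^{-N})$ correction therefore needs an explicit argument (e.g.\ splitting at $y=t^{1/2}$ as the paper does for the Laplace transform, or an Abel-limit justification); this is routine but is precisely the step that the paper's contour deformation sidesteps.
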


Theorem~\ref{lma.B1C} will be proven in the rest of this section. Assuming this theorem, 
we can give

\begin{proof}[Proof of Theorem~\ref{lma.b1}] 
Observe that
$$
\wh \omega(- j)
=
\frac1{2\pi}\int_{-\infty}^\infty \Omega(x)e^{ijx}dx= (2\pi)^{-1/2}
\wh{\Omega }(- j).
$$
So   the asymptotics \eqref{b5} for the Fourier coefficients $\wh\omega(-j)$ 
with   the error term $g(-j)= G (-j)$ follows from the asymptotics \eqref{b5c} for the 
Fourier transform $\wh \Omega(-t)$. 
We only have to check that the estimates \eqref{eq:c8Fc}
for the function $G $ and its derivatives yield the estimates \eqref{eq:c8F}
for the sequence $g$. This elementary statement follows, 
for example, from the explicit formula 
$$
g^{(m)}(-j)
=
\int_0^1 dt_1\int_0^1 dt_2\cdots\int_0^1dt_m\,
G^{(m)}(- j+t_1+\dots+t_m),
$$
which can be checked by induction in $m$. 
\end{proof}

 Theorem~\ref{lma.B1C} is proven below through a sequence of steps.
In Lemma~\ref{Lapl} we compute the asymptotics of the Laplace transform
of explicit functions with a logarithmic singularity. In Lemma~\ref{L}, 
we use a contour deformation argument to reduce the question of asymptotics of the 
Fourier transform to that of the Laplace transform.
In Lemma~\ref{lma.d4} we show that the functions $v_{j,\sigma}$ and $u_{j,\sigma}$ 
in the definition of $\omega$ (see Assumption~\ref{AsLog}) can be replaced by their values at zero.
The proof of  Theorem~\ref{lma.B1C} is concluded in Section~4.4.

\subsection{Laplace and Fourier transforms of   logarithmic functions}

Let us start with an elementary result on the  asymptotic expansion of the 
Laplace transform.

\begin{lemma}\label{Lapl}
Let   $\alpha\in {\bbR}$, $m \in\bbZ_{+}$ and
let $c\in(0,1)$.  Then
\begin{multline}
\int_0^{c} \bigl(-\log y\bigr)^{-\alpha} y^m   e^{-yt}dy 
= 
t^{-1-m} ( \log t )^{-\alpha}
\\
\times \bigl(m! +\alpha \Gamma' (m+1) (\log t)^{-1} +O ( (\log t)^{-2}) \bigr)
\label{eq:L2}
\end{multline} 
as $t\to+\infty$ where $\Gamma'$ is the derivative of the Gamma function $\Gamma$.
\end{lemma}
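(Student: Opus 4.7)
The plan is to reduce the Laplace integral to a standard Gamma-function computation by a rescaling, and then to Taylor-expand the slowly varying logarithmic factor.

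First I would change variables $y=s/t$, so that $-\log y = \log t-\log s$ and
\begin{equation*}
\int_0^{c} (-\log y)^{-\alpha} y^m e^{-yt}\,dy
=
t^{-1-m}(\log t)^{-\alpha}\int_0^{ct}\Bigl(1-\tfrac{\log s}{\log t}\Bigr)^{-\alpha} s^m e^{-s}\,ds.
\end{equation*}
The strategy is to Taylor expand the factor $(1-\log s/\log t)^{-\alpha}$ in powers of $(\log t)^{-1}$. Writing
\begin{equation*}
\Bigl(1-\tfrac{\log s}{\log t}\Bigr)^{-\alpha}
=
1+\alpha\,\tfrac{\log s}{\log t}+R(s,t),\qquad R(s,t)=O\Bigl(\tfrac{(\log s)^{2}}{(\log t)^{2}}\Bigr),
\end{equation*}
the leading contributions come from the first two terms. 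Since $\int_0^\infty s^m e^{-s}ds=\Gamma(m+1)=m!$ and $\int_0^\infty s^m(\log s) e^{-s}ds=\Gamma'(m+1)$, extending the upper limit from $ct$ to $\infty$ in these two integrals costs only exponentially small terms in $t$, so these integrals produce exactly the claimed main terms $m!$ and $\alpha\Gamma'(m+1)(\log t)^{-1}$.

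The remaining task is to bound the error carefully. I would split $[0,ct]$ into three pieces by fixing $\delta\in(0,1)$ and cutting at $s=t^{-\delta}$ and $s=t^{\delta}$. On the middle piece $t^{-\delta}\le s\le t^{\delta}$, we have $|\log s/\log t|\le\delta<1$, so the Taylor remainder satisfies $|R(s,t)|\le C(\log t)^{-2}(\log s)^{2}$ uniformly, and $\int_0^\infty s^m(\log s)^2 e^{-s}\,ds<\infty$, giving an $O((\log t)^{-2})$ contribution as required. On the tail $s\ge t^{\delta}$, the prefactor is at most a power of $\log t$ while $e^{-s}$ provides $O(e^{-t^{\delta}})$ decay, hence a negligible contribution. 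On the lower tail $0<s<t^{-\delta}$, the substitution $u=\log(t/s)$ recasts the integral as $\int_{(1+\delta)\log t}^{\infty}u^{-\alpha}e^{-(m+1)u}t^{-m-1}\cdot t^{m+1}\cdots\,du$, which is super-polynomially small in $\log t$, again negligible.

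The mildly delicate point, which will be the main obstacle in the write-up rather than a real obstruction, is the case $\alpha<0$: then $(1-\log s/\log t)^{-\alpha}=(\log(t/s)/\log t)^{|\alpha|}$ grows as $s\to 0^+$ and as $s\to ct$, so the naive bounds on the Taylor remainder break down near these endpoints. This is precisely what forces the three-region split described above; the bounds on the outer two regions must be verified separately of the Taylor expansion and use only the monotone behaviour of $\log(t/s)$ combined with the decay of $s^m e^{-s}$ (respectively the smallness of the Lebesgue measure of $[0,t^{-\delta}]$). Once these are in hand, collecting the main terms yields \eqref{eq:L2}.
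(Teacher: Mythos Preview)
Your proposal is correct and follows essentially the same route as the paper: rescale $y=s/t$, pull out $t^{-1-m}(\log t)^{-\alpha}$, Taylor-expand $(1-\tfrac{\log s}{\log t})^{-\alpha}$ to second order, and identify the Gamma integrals. The only difference is cosmetic: the paper makes a single cut at $s=t^{1/2}$ (so that $u=-\tfrac{\log s}{\log t}\ge -\tfrac12$ on the surviving piece and the global bound $|(1+u)^{-\alpha}-1+\alpha u|\le Cu^2$ for $u\ge -\tfrac12$ suffices), whereas your three-region split at $s=t^{\pm\delta}$ is slightly more elaborate but, as you note, more robust for very negative $\alpha$.
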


\begin{proof}
First, we split    the integral \eqref{eq:L2} into the integrals
over $(0,t^{-1/2})$ and over $(t^{-1/2}, c)$. 
Due to the factor $e^{-y t}$ the second integral decays faster than any power of $t^{-1}$.
 Making the change of variables $x=y t$, we see that the first integral equals
\begin{equation}
t^{-1-m}(\log t )^{-\alpha}\int_0^{t^{1/2}}  
  \bigl(1- \tfrac{\log x}{\log t}\bigr)^{-\alpha} x^m e^{-x}d x.
\label{eq:Lu}
\end{equation}
Since $u= - \frac{\log x}{\log t} \geq -1/2$ for $x\leq t^{1/2}$, 
we can use the estimate  
\begin{equation}
\Abs{ (1+u)^{-\alpha} -1+\alpha u}
\leq 
C u^2, 
\quad 
u\geq-1/2.
\label{eq:L}
\end{equation}
Thus the integral \eqref{eq:Lu} equals
\begin{equation}
 t^{-1-m}  (\log t )^{-\alpha}
\int_0^{t^{1/2}}  (1+\alpha \tfrac{\log x}{\log t}) \, x^m e^{-x}d x+R(t),
\label{eq:L4q}
\end{equation} 
where the remainder satisfies the estimate 
$$
\abs{R(t)} \leq Ct^{-1-m}(\log t )^{-\alpha-2}\int_0^{\infty}   ( \log x  )^2 \, x^m  e^{-x}d x.
$$
The integral in \eqref{eq:L4q} can be extended to ${\bbR}_{+}$ and then calculated in terms of the Gamma function.  
The arising error decays   faster than any power of $t^{-1}$ as $t\to + \infty$. 
This yields \eqref{eq:L2}.
\end{proof}

The full asymptotic  expansion of the Laplace transform \eqref{eq:L2} is of course well known; 
see,  e.g.  Lemma~3 in \cite{Erdelyi}, but the above proof is slightly simpler than that in \cite{Erdelyi}.   
Here we need two terms, but the method allows one to easily   obtain the full expansion.
 
Next, we discuss the Fourier transform.
Below we suppose that $\arg x>0$ for $x>0$. 
Fix some complex number $a$. We  choose  a number $c > 0$ so small that $ -\log x +a \neq 0$  for $x\in (0,c)$.
 
\begin{lemma}\label{L}
Let $a\in {\bbC}$, $\alpha\in {\bbR}$ and $m \in\bbZ_{+}$.
Suppose that a $C^\infty$ function
$\chi_0$  satisfies condition \eqref{d7} with a sufficiently small $c $. 
Then 
\begin{multline}
\int_{-\infty}^\infty (-\log |x| +a)^{-\alpha} \1_{\pm} (x) \chi_0 (x) x^m e^{ixt}dx
=  
\pm i^{m+1}t^{-m-1}  (\log t)^{-\alpha}
\\
\times \Bigl(m!  + \alpha   \bigl(\Gamma' (m+1) + m! (\pm \pi i/2 - a) \bigr)  (\log t)^{-1}+O( (\log t)^{-2})\Bigr)
\label{eq:yy}
\end{multline}
as $t\to + \infty$. 
\end{lemma}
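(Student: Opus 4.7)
The plan is to relate the Fourier-type integral to a Laplace integral and apply Lemma~\ref{Lapl}. I treat the $\1_+$ case in detail; the $\1_-$ case follows from the substitution $x\mapsto -x$ (using that $\chi_0$ is even), which reduces it to an integral against $e^{-iyt}$ requiring contour deformation into the \emph{lower} half plane. This produces the sign $-i^{m+1}$ and the shift $a\mapsto a+i\pi/2$ in place of $a\mapsto a-i\pi/2$, matching the $-$ alternative in \eqref{eq:yy}.

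Since $\chi_0$ is smooth, compactly supported, and equals $1$ on $[-c_1,c_1]$, the contribution of $\{x\geq c_1\}$ to the $\1_+$ integral has a $C_0^\infty$ integrand supported away from the singularity, hence is $O(t^{-N})$ for every $N$ by repeated integration by parts. For the main part
\begin{equation*}
I(t):=\int_0^{c_1}(-\log x+a)^{-\alpha}x^m e^{ixt}\,dx,
\end{equation*}
the function $f(z):=(-\log z+a)^{-\alpha}z^m$ extends analytically to a neighbourhood of the closed rectangle $\{0\leq \Re z\leq c_1,\ 0\leq \Im z\leq R\}$ (using the principal branches of $\log z$ and $z^{-\alpha}$; for $c_1$ sufficiently small, $-\log z+a$ stays bounded away from $0$ there). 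Cauchy's theorem, together with letting $R\to\infty$ (the top horizontal side vanishes since $|e^{izt}|=e^{-Rt}$), yields
\begin{equation*}
I(t)=i\int_0^\infty f(iy)e^{-yt}\,dy - ie^{ic_1 t}\int_0^\infty f(c_1+iy)e^{-yt}\,dy,
\end{equation*}
and the second term is $O(t^{-N})$ for every $N$ because $f(c_1+iy)$ is $C^\infty$ and bounded in $y$.

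For the remaining Laplace integral, the identity $\log(iy)=\log y+i\pi/2$ for $y>0$ gives
\begin{equation*}
i\int_0^\infty f(iy)e^{-yt}\,dy = i^{m+1}\int_0^\infty (-\log y+a')^{-\alpha}\,y^m e^{-yt}\,dy, \qquad a':=a-i\pi/2.
\end{equation*}
Applying $(1+u)^{-\alpha}=1-\alpha u+O(u^2)$ with $u=a'/(-\log y)$ (valid for $y$ small enough), I expand
\begin{equation*}
(-\log y+a')^{-\alpha}=(-\log y)^{-\alpha}-\alpha a'(-\log y)^{-\alpha-1}+O\bigl((-\log y)^{-\alpha-2}\bigr)
\end{equation*}
and apply Lemma~\ref{Lapl} to each of the first two terms (at exponents $\alpha$ and $\alpha+1$ respectively); the remainder and the tail $y\geq c$ together contribute only $O\bigl(t^{-1-m}(\log t)^{-\alpha-2}\bigr)$. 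Combining the leading piece $m!\,t^{-1-m}(\log t)^{-\alpha}$ with the corrections $\alpha\Gamma'(m+1)(\log t)^{-1}$ from Lemma~\ref{Lapl} and $-\alpha a' m!(\log t)^{-1}$ from the expansion, and substituting $a'=a-i\pi/2$, produces the coefficient $\alpha\bigl(\Gamma'(m+1)+m!(i\pi/2-a)\bigr)(\log t)^{-1}$ required in \eqref{eq:yy} for the $+$ sign.

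The main technical obstacle is justifying the contour deformation: the analytic continuation of $(-\log z+a)^{-\alpha}$ to the deformation rectangle forces $c_1$ to be small (so that $-\log z+a\neq 0$ there), and the branch point at $z=0$ must be handled by first truncating the real axis at $z=\epsilon$ and sending $\epsilon\to 0$ (which is harmless since the integrand is integrable there). The remaining estimates are routine: the vertical side at $\Re z=c_1$ is controlled by the smoothness of $f(c_1+iy)$, and applying Lemma~\ref{Lapl} at the shifted exponent $\alpha+1$ requires no new ideas.
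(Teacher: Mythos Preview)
Your overall strategy---rotate the contour to the imaginary axis and reduce to the Laplace asymptotics of Lemma~\ref{Lapl}---is also the paper's, but your handling of the cut at $x=c_1$ contains a genuine gap. You assert that the contribution of $\{x\ge c_1\}$ is $O(t^{-N})$ because the integrand is ``$C_0^\infty$ supported away from the singularity''. It is smooth there, but since $\chi_0(c_1)=1$ the function $(-\log x+a)^{-\alpha}x^m\chi_0(x)$ is \emph{nonzero} at the left endpoint $c_1$; integration by parts therefore produces boundary terms at $c_1$, and this piece is only $O(t^{-1})$, not $O(t^{-N})$. The same error occurs on the other side of the ledger: the vertical side $-ie^{ic_1t}\int_0^\infty f(c_1+iy)e^{-yt}\,dy$ is a Laplace integral with $f(c_1)\neq0$, hence by Watson's lemma it is $\sim -ie^{ic_1t}f(c_1)/t$, again only $O(t^{-1})$. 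These two $O(t^{-1})$ contributions do cancel to all orders, but that cancellation is exactly what requires proof and your argument does not supply it.

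The paper circumvents this by first integrating by parts in $x$ to write the integral as $-\int_0^\infty A(x,t)\,\chi_0'(x)\,dx$ with $A(x,t)=\int_0^x(-\log z+a)^{-\alpha}z^me^{izt}\,dz$; the point is that $\chi_0'$ genuinely lies in $C_0^\infty(0,c)$. The contour defining $A$ is then deformed only to a small finite height $\kappa$. The vertical return segment produces a term $A_2(x,t)$ carrying the oscillatory factor $e^{ixt}$, and the subsequent integration of $e^{ixt}a_n(x)$ against the compactly supported $\chi_0'(x)$ is what finally yields $O(t^{-N})$. A secondary benefit of the finite height $\kappa$ is that one avoids the region where $-\log z+a$ may vanish or cross the branch cut of $w\mapsto w^{-\alpha}$; your deformation to $R\to\infty$ in the full strip $0\le\Re z\le c_1$ is not in general safe for arbitrary $a\in\bbC$.
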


\begin{proof}
Consider first the sign $``+"$.
For $x\in(0,c )$, denote
\begin{equation}
A(x, t)=\int_0^x \bigl(-\log z + a\bigr)^{-\alpha} z^m e^{ izt}dz.
\label{eq:y2}\end{equation}
Integrating by parts, we see that
\begin{equation}
 \int_0^\infty \bigl(-\log x + a \bigr)^{-\alpha}\chi_0 (x) x^m e^{ ixt}dx=-\int_0^\infty  A(x, t)\chi_0 ' (x)  dx
\label{eq:y3}\end{equation}
where $\chi_0'\in C_{0}^\infty ({\bbR}_{+})$.
Our plan is to find the asymptotics of $A(x, t)$ as $t\to + \infty$ for $x$ in compact subsets of $(0,c )$
and to substitute it into \eqref{eq:y3}.
Let us choose $\kappa>0$ so small that $ -\log z +a \neq 0$ for $z$  in the closed rectangle in the complex plane
with the vertices $0, i \kappa, i \kappa +c , c $.
Instead of $(0,x)$, we can integrate over the line segments $(0,i\kappa)$, $(i\kappa, i \kappa+x)$ and $(i\kappa +x,x)$
 in \eqref{eq:y2}:
\begin{multline*}
A(x,t)
=
\Biggl(\int_0^{i \kappa}+\int_{i \kappa}^{i \kappa+x}+\int_{i \kappa+x}^x\Biggr)
(-\log z + a)^{-\alpha}z^m e^{izt}dz
\\
=:
A_0(t)+A_1(x,t)+A_2(x,t). 
\end{multline*}

Let us first consider the integral over  $(0, i \kappa)$. 
Setting $z= iy$ and using \eqref{eq:L}, we see that
\begin{multline*}
A_{0}( t)
= 
i^{m+1} \int_0^{\kappa} (-\log  y-  i\pi/2 +a )^{-\alpha} y^m e^{-yt}dy
\\
=  
i^{m+1} \int_0^{\kappa}  (-\log y)^{-\alpha}  \bigl(1+ (i\pi/2 - a) \alpha  (-\log y)^{- 1} + \varepsilon(y) \bigr)  y^m e^{-yt}dy 
\end{multline*}
where $\varepsilon(y)= O ( (\log y)^{-2})$  as $y\to 0$. 
Thus we have reduced the question to computing the asymptotics of the Laplace transform. 
Now it follows from formula \eqref{eq:L2} that
\begin{multline*}
A_{0} (t) = i^{m+1}t^{-1-m}   (\log t)^{-\alpha}
\\
\times\bigl(m!  + \alpha (\Gamma' (m+1) + m!(i\pi/2 - a )  )(\log t)^{-1} +O ( (\log t)^{-2}) \bigr).  
\end{multline*} 
Substituting this asymptotics into the integral \eqref{eq:y3}, we get the right-hand side of \eqref{eq:yy}. 

It remains to show that the terms $A_1$ and $A_2$ do not contribute to the 
asymptotics of the integral \eqref{eq:y3}.
Making the change of variables $z= i \kappa +y$, we see that  the integral
 \begin{multline*}
A_{1}(x, t)
=\int_{i \kappa}^{  i \kappa+x } \bigl(-\log z + a\bigr)^{-\alpha} z^m e^{ i z t}d z 
\\
=   e^{-\kappa t} \int_0^{x} \bigl(-\log( i \kappa+y) +a\bigr)^{-\alpha} ( i \kappa +y)^m e^{ iyt}dy
\end{multline*}
decays exponentially as $t \to \infty$. 
This implies that the contribution of  
$A_{1}(x, t)$  to the integral \eqref{eq:y3} also
decays exponentially.

Similarly, making the change of variables $z=x + i  \kappa y $, we can rewrite $A_2$ as
\begin{multline*}
A_{2}(x, t)
=
\int_{ i \kappa+x}^x \bigl(-\log z +a \bigr)^{-\alpha} z^me^{ izt}dz 
\\
=  - i\kappa e^{ itx} \int_{0}^1 \bigl(- \log( x+  i \kappa y) + a\bigr)^{-\alpha}( x + i \kappa y)^m e^{-\kappa yt}dy.
\end{multline*}
In the right-hand side we can integrate by parts arbitrarily many times.
It is important that the integrand $\bigl(- \log( x+  i \kappa y) + a\bigr)^{-\alpha}( x + i \kappa y)^m$ is 
a $C^\infty$-smooth function of $y\in[0,1]$. 
The contribution of the point $y =1$ decays exponentially and 
therefore we have the asymptotic expansion
\begin{equation}
A_{2}(x, t) =   e^{ itx}\sum_{n=1}^N a_{n}(x) t^{-n}+O(t^{-N-1}), \quad t\to + \infty, \quad \forall N>0,
\label{eq:y6}
\end{equation}
with some functions $a_{n}(x)$ that are smooth on the  interval $(0,c )$. 
Substituting \eqref{eq:y6} into \eqref{eq:y3} and integrating by parts with respect to $x$, 
we see that the contribution of   $A_{2}(x, t)$ decays faster than any power of $t^{-1}$ as $t\to\infty$.

To prove  \eqref{eq:yy} for the sign $``-"$, 
we take the relation  \eqref{eq:yy} for the sign $``+"$, 
make the change of the variables $x\mapsto -x$ and pass to the complex conjugation. 
\end{proof}

For $a=0$, the asymptotics of the oscillating  integral \eqref{eq:yy} is  well known (see \cite{WL}), 
although our proof seems to be somewhat simpler than that in \cite{WL} even in this case. 
So we have given the proof of  Lemma~\ref{L}  mainly for the completeness of our presentation. Of course the method of proof of Lemma~\ref{L}   
yields the complete asymptotic expansion of the integral \eqref{eq:yy}, but we do not need it.

\subsection{Replacing $v (x)$ by $v(0)$ and $u (x)$ by $u(0)$}

Here our aim is to prove that the variable parameters $v_{j,\sigma}$ and $u_{j,\sigma}$ in
the definition of the function $\omega$ (see Assumption~\ref{AsLog}) can be replaced by 
their values at zero.

\begin{lemma}\label{lma.d4}
Let  $\alpha\in {\bbR}$, $m\in {\bbZ}_{+}$, $\sigma=\pm$, and let functions $v, u \in C^\infty$.
Then  
 \begin{multline}
\int_{-\infty}^\infty
\Bigl(v  (x)(-\log\abs{x} +  u (x))^{ -\alpha }- v  (0)(-\log\abs{x} +  u (0))^{ -\alpha } \Bigr)\1_\sigma(x) \chi_0(x) x^m e^{ixt}dx
\\
= O (t^{-\rho}  ), \quad \forall \rho <m+2,
\label{eq:repl2}
 \end{multline}
as $t\to\infty$. 
\end{lemma}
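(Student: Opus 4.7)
The key observation is that subtracting the $(v(0),u(0))$-version from the $(v(x),u(x))$-version creates an extra factor of $x$ in the integrand, raising the polynomial weight from $x^m$ (as in Lemma~\ref{L}) to $x^{m+1}$. To extract this factor, I would Taylor expand $v(x)=v(0)+xV(x)$, $u(x)=u(0)+xU(x)$ with $V,U\in C^\infty$, and use the identity
$$
L(x)^{-\alpha}-L_0(x)^{-\alpha}=-\alpha xU(x)\int_0^1\bigl[L_0(x)+sxU(x)\bigr]^{-\alpha-1}\,ds,
$$
where $L(x)=-\log|x|+u(x)$ and $L_0(x)=-\log|x|+u(0)$. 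This rewrites the integrand as $x^{m+1}H(x)\1_\sigma(x)\chi_0(x)e^{ixt}$, where
$$
H(x)=v(x)U(x)\Psi(x)L_0(x)^{-\alpha-1}+V(x)L_0(x)^{-\alpha}
$$
with $\Psi(x)=-\alpha\int_0^1[1+sxU(x)/L_0(x)]^{-\alpha-1}\,ds$ smooth on $\supp\chi_0$.

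The next step is the derivative estimate $|H^{(k)}(x)|\le C_k|x|^{-k}(-\log|x|)^{-\alpha}$ for $x$ near $0$, which follows from an easy induction giving the closed form $d^k L_0^{-\alpha}/dx^k = x^{-k}\sum_{j=1}^k c_j^{(k)}L_0^{-\alpha-j}$ (and analogously for $L_0^{-\alpha-1}$), together with Leibniz applied to the product structure of $H$. Consequently
$$
\Bigl|D^j\bigl(x^{m+1}H(x)\chi_0(x)\bigr)\Bigr|\le C_j|x|^{m+1-j}(-\log|x|)^{-\alpha}, \quad j\le m+1,
$$
and this vanishes at the origin for $j\le m$. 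Hence $m+1$ successive integrations by parts on the half-line $\bbR_\sigma$ produce no boundary terms and transform the left-hand side of \eqref{eq:repl2} into $(it)^{-(m+1)}$ times the Fourier integral of the bounded function $D^{m+1}(x^{m+1}H(x)\chi_0(x))$, which is of size $O((-\log|x|)^{-\alpha})$ at $0$ and is smooth on $\bbR_\sigma\setminus\{0\}$ with compact support.

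To conclude, I would bound this remaining Fourier integral by $O(t^{-\rho'})$ for any $\rho'<1$, by a standard splitting at $|x|\sim t^{-1}$: the short-range contribution is estimated by $\int_0^{1/t}(\log(1/x))^{-\alpha}\,dx = O(t^{-1}(\log t)^{-\alpha})$, and one more integration by parts on the smooth region $|x|>t^{-1}$ contributes a term of the same order. Putting the two steps together yields a bound $O(t^{-(m+2)+\epsilon})$ for every $\epsilon>0$, i.e., $O(t^{-\rho})$ for any $\rho<m+2$, as required.

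The main technical obstacle is the careful derivative bookkeeping of Step~2: one must verify that the (potentially singular) $L_0^{-\alpha-j}$ factors produced by repeated differentiation are always paired with sufficiently many powers of $x$ to kill the boundary contributions through all $m+1$ integrations by parts, and to keep the final integrand bounded. A secondary point is that $u(0)\in\bbC$ makes $L_0(x)$ complex-valued, but since $|L_0(x)|\sim-\log|x|$ for small $x$, the moduli bounds $|L_0^{-\alpha-j}|\le C(-\log|x|)^{-\alpha-j}$ are unaffected and the rest of the argument goes through unchanged.
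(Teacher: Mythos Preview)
Your argument is correct and reaches the conclusion, but it follows a different route from the paper's.

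The paper expands to \emph{second} order: it writes $v(x)=v(0)+v'(0)x+v_1(x)$ with $v_1(0)=v_1'(0)=0$, and uses $(1+z)^{-\alpha}=1-\alpha z+R(z)$ with $R(0)=R'(0)=0$ for the logarithmic factor. This decomposition produces terms of two kinds: (i) explicit pieces of the form $c\,x^{m+1}(-\log|x|+u(0))^{-\beta}$, to which Lemma~\ref{L} applies directly with $m\mapsto m+1$, giving $O(t^{-m-2}(\log t)^{-\beta})$; and (ii) remainder pieces that vanish to order $m+2$ at the origin, so that $m+2$ integrations by parts give $O(t^{-m-2})$ outright. Your approach instead expands only to first order, pulls out a single factor of $x$ via the integral remainder $L^{-\alpha}-L_0^{-\alpha}=-\alpha xU\int_0^1[\cdots]^{-\alpha-1}ds$, integrates by parts $m+1$ times, and then squeezes out the last $t^{-1+\varepsilon}$ by splitting the remaining oscillatory integral at $|x|\sim t^{-1}$. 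The paper's version is cleaner and slightly sharper (it recycles the already-proven Lemma~\ref{L} and avoids the extra splitting), while yours is self-contained in that it never invokes Lemma~\ref{L}.

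Two small slips worth fixing: $\Psi$ is not smooth across $x=0$ (its derivatives contain factors of $L_0^{-1}$), only on $\supp\chi_0\setminus\{0\}$ with $\Psi(0+)=-\alpha$; this is all you actually use. And for $\alpha\le 0$ the function $D^{m+1}(x^{m+1}H\chi_0)$ is not bounded but $O((-\log|x|)^{-\alpha})$; it is still in $L^1$, so your splitting step goes through unchanged.
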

\begin{proof}
Let $\varphi \in C^{m+1}$ and $\varphi^ {(m+2)} \in L^{1}_{\text{loc}}$.
Integrating by parts $m+2$ times, we see that
\begin{equation}
\int_{-\infty}^\infty
\varphi (x) \1_\sigma(x) \chi_0(x)   e^{ixt}dx = O (t^{-m-2}) 
\quad \text{ if } \quad  
\varphi (0)= \cdots = \varphi^ {(m+1)}(0)=0.
\label{eq:repl3}
\end{equation}
Let us use the fact that
$
(1+  z)^{ -\alpha}= 1 -\alpha  z+ R  (z)
$
where the remainder 
\begin{equation}
R \in C^\infty
\quad \text{ and } \quad 
R (0)= R' (0)=0. 
\label{eq:rem1}
\end{equation}
Therefore we have
\begin{multline}
 (-\log\abs{x} +  u (x))^{ -\alpha}
=    (-\log\abs{x} +  u (0))^{ -\alpha}\bigl(1+   w (x)\bigr)^{ -\alpha}
\\
=  (-\log\abs{x} +  u (0))^{ -\alpha}\Bigl( 1 -\alpha    w (x)   + R  (w(x))\Bigr)
\label{rem}
\end{multline}
where 
\begin{equation}
 w (x)= (-\log\abs{x} +  u (0))^{ -1} ( u (x)- u (0)). 
 \label{eq:rem2}
\end{equation}
We substitute  the expression \eqref{rem}  into the integral \eqref{eq:repl2} and consider every term separately. 

First we consider 
$$ v(x) (-\log\abs{x} +  u (0))^{ -\alpha } x^m = ( v(0) + v'(0)x + v_{1} (x) )(-\log\abs{x} +  u (0))^{ -\alpha } x^m$$
where $v_{1}  \in C^\infty$ and $v_{1} (0)= v_{1}' (0)=0$. 
The term with $v(0)$ is cancelled out by the second term in the integrand in \eqref{eq:repl2}.  According to Lemma~\ref{L},   the contribution of $ v'(0)x   $ is bounded by  $C t^{-m-2} (\log t)^{ -\alpha }$, and according to \eqref{eq:repl3}  the   contribution of $ v_{1} (x)$ is bounded by  $C t^{-m-2}  $. 

Next, we consider   the term 
$$
 v(x) (-\log\abs{x} +  u (0))^{ -\alpha-1}    (u(x)- u(0))x^m.
$$
We have $ v(x)     (u(x)- u(0))x^m= v(0) u'(0)x^{m+1} + R_{1} (x)$ where $R_{1} \in C^\infty$ and $R_{1} (0)=\cdots= R_{1}^{(m+1)} (0)=0$.  As we have already seen,  by Lemma~\ref{L}  the contribution of $ v(0) u'(0)x^{m+1}  $ is bounded by  $C t^{-m-2} (\log t)^{ -\alpha -1}$,  and by \eqref{eq:repl3}  the   contribution of $ R_{1} (x)$ is bounded by  $C t^{-m-2}  $.

 It remains to consider the function 
\begin{equation}
\varphi(x)= v(x) (-\log\abs{x} +  u (0))^{ -\alpha}  R  (w(x))x^m.
\label{eq:rem3}
\end{equation}
Clearly, $w\in C^\infty ((-c, c ) \setminus \{ 0\})$ and differentiating \eqref{eq:rem2} we see that $w^{(k)}(x)=O(|x|^{1-k})$ as $x\to 0$ for all $k=0,1,\ldots$. Therefore differentiating the composite function $R  (w(x))$ and taking into account \eqref{eq:rem1}, we find that
$$
\frac{d^k}{dx^k}  R  (w(x)) = O(|x|^{2-k}),\quad k=0,1,\ldots.
$$
Finally,  differentiating the product \eqref{eq:rem3}, we see that $\varphi \in C^\infty ((-c , c ) \setminus \{ 0\})$ and 
$$
  \varphi^{(k)}(x) = O((-\log\abs{x}  )^{ -\alpha}  |x|^{2+m-k}), \quad k=0,1,\ldots,
$$
as $x\to 0$. Thus we can   apply relation \eqref{eq:repl3} to the function \eqref{eq:rem3}.
\end{proof}

Putting together Lemmas~\ref{L} and \ref{lma.d4}, we obtain the following result.

\begin{lemma}\label{repl}
Let the assumptions of Lemma~\ref{lma.d4} be satisfied. Then, as $t\to+\infty$,
\begin{multline}
 \int_{-\infty}^\infty v(x) (-\log |x |+ u(x))^{-\alpha} \1_{\pm} (x) \chi_0 (x)  x^m e^{ixt}dx
=  
\pm  i^{m+1} t^{-m-1}  (\log t)^{-\alpha}
\\
\times v(0) \Bigl(m!  + \alpha   \bigl(\Gamma' (m+1) + m! (\pm \pi i/2 - u(0)) \bigr)  (\log t)^{-1}+O( (\log t)^{-2})\Bigr).
\label{eq:yyP}
\end{multline}
\end{lemma}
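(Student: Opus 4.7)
The plan is to reduce the general integral in \eqref{eq:yyP} to the special case already treated in Lemma~\ref{L} (where $v\equiv v(0)$ and $u\equiv u(0)$) by peeling off the variable coefficients via Lemma~\ref{lma.d4}. Concretely, I would decompose the integrand as
\begin{equation*}
v(x)(-\log|x|+u(x))^{-\alpha}
=
v(0)(-\log|x|+u(0))^{-\alpha}
+
\bigl[v(x)(-\log|x|+u(x))^{-\alpha}-v(0)(-\log|x|+u(0))^{-\alpha}\bigr],
\end{equation*}
multiply by $\1_\pm(x)\chi_0(x)x^m e^{ixt}$, and integrate over $\bbR$.

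For the first summand, I would apply Lemma~\ref{L} with $a=u(0)$ and then multiply the resulting asymptotic expansion through by the scalar $v(0)$. This reproduces exactly the main term on the right-hand side of \eqref{eq:yyP}, including the two-term expansion with leading coefficient $m!\,v(0)$ and the $(\log t)^{-1}$-correction featuring $\Gamma'(m+1)+m!(\pm \pi i/2 - u(0))$.

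For the second (difference) summand, I would invoke Lemma~\ref{lma.d4} directly: its conclusion gives that the corresponding integral is $O(t^{-\rho})$ for every $\rho<m+2$. Choosing, for instance, $\rho=m+3/2$ yields a bound of order $t^{-m-3/2}$, which decays strictly faster than $t^{-m-1}(\log t)^{-\alpha-2}$ as $t\to+\infty$. Hence this remainder is absorbed into the $O((\log t)^{-2})$ error term already present in the Lemma~\ref{L} expansion after factoring out $t^{-m-1}(\log t)^{-\alpha}$.

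I do not anticipate a serious obstacle, since both ingredients are already in place: the hardest technical work — the contour deformation producing the Laplace integral for Lemma~\ref{L}, and the Taylor/integration-by-parts bookkeeping for Lemma~\ref{lma.d4} — has been done. The only point requiring a small amount of care is verifying that the error $O(t^{-\rho})$ from Lemma~\ref{lma.d4} is genuinely negligible compared to the subleading term $t^{-m-1}(\log t)^{-\alpha-1}$ retained in \eqref{eq:yyP}; this is immediate since $\rho$ can be taken strictly greater than $m+1$, giving a full power of $t$ to spare against any logarithmic factor.
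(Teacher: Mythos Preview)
Your proposal is correct and matches the paper's approach exactly: the paper itself offers no separate proof of this lemma, merely stating that it is obtained by ``putting together'' Lemmas~\ref{L} and~\ref{lma.d4}, which is precisely the decomposition and absorption argument you describe. Your observation that any $\rho>m+1$ in Lemma~\ref{lma.d4} beats every logarithmic correction is the only point needing comment, and you handle it correctly.
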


\subsection{Proof of Theorem~\ref{lma.B1C}}

Since
$$
\biggl(\frac{d}{dt}\biggr)^m \bigl( t^{-1}(\log t)^{-\alpha} \bigr)
 =  
 (- 1)^{m}
m! t^{-1-m}(\log t)^{-\alpha} \bigl(1+ O ( (\log t)^{-1})\bigr),  
$$
we need to prove that 
\begin{equation}
\int_{-\infty}^\infty \Omega(x) x^m   e^{ixt}dx
=
2\pi b i^m m! t^{-1-m}(\log t)^{-\alpha}  \bigl(1+ O ( (\log t)^{-1})\bigr)
\label{*0}
\end{equation}
as $t\to+\infty$, where the asymptotic coefficient $b  $ is given by equality  \eqref{b4ab}. 

Recall that the function $\Omega(x)  $ is defined by formulas \eqref{b4cx} and \eqref{eq:Omega}. So we only have to substitute this expression for $\Omega(x)$ into the left-hand side and to use Lemma~\ref{repl}. 
Thus, keeping only the leading term in  the right-hand side of \eqref{eq:yyP}, we find that
\begin{multline}
\sum_{\sigma=\pm} \int_{-\infty}^\infty v_{1, \sigma} (x)(-\log\abs{x} +  u_{1, \sigma }(x))^{-\alpha }\1_\sigma(x) \chi_0(x)x^m   e^{ixt}dx
\\
=
  i^{m+1} m! (v_{1, +} (0) - v_{1, -} (0) ) t^{-m-1}(\log t)^{-\alpha}
\bigl( 1 + (\log t)^{ -1}\bigr) .
\label{eq:Z1}
\end{multline}
Similarly, using \eqref{eq:yyP} with $\alpha$ replaced by $\alpha-1$, we obtain
\begin{multline}
\sum_{\sigma=\pm} \int_{-\infty}^\infty v_{0, \sigma} (x)(-\log\abs{x} +  u_{0, \sigma }(x))^{1-\alpha }\1_\sigma(x) \chi_0(x)x^m   e^{ixt}dx =   i^{m+1} t^{-m-1}  (\log t)^{-\alpha}
\\
\times
\biggl\{v_{0,+}(0) \biggl(m! \log t + (\alpha  -1) \bigl(\Gamma' (m+1) + m! ( \pi i/2 - u_{0,+}(0)) \bigr) \biggr)
\\
- v_{0,-}(0) \Bigl(m! \log t + (\alpha  -1) \bigl(\Gamma' (m+1) + m! (- \pi i/2 - u_{0,-}(0)) \bigr) \Bigr)
+O( (\log t)^{-1}) \biggr\}.
\label{eq:Z2}
\end{multline}
Taking into account condition \eqref{eq:bbx}, we see that the right-hand side here equals
$$
 i^{m+1} m!  t^{-m-1}  (\log t)^{-\alpha} v_{0} (\alpha  -1)  ( \pi i - u_{0,+}(0)+ u_{0,-}(0)) \bigl( 1+O( (\log t)^{-1})\bigr).
 $$
 Thus putting together relations  \eqref{eq:Z1}, \eqref{eq:Z2}, we conclude the proof of \eqref{*0} and hence of Theorem~\ref{lma.B1C}.

\appendix
\section{Besov and Besov-Lorentz spaces}

Here for completeness we recall the definitions of the Besov  class  $B_{p,p}^{1/p}$ and 
the Besov-Lorentz  class $\mathfrak B_{p,\infty}^{1/p}$  on $\bbT$. 
The parameter  $p>0$ is arbitrary. 
We refer to     the books \cite{Peller} (see Section 6.4 and Appendix 2) and \cite{Triebel}  for more details.

Let $w\in C_0^\infty(\bbR)$ be a function with the properties $w\geq0$, $\supp w=[1/2,2]$
and 
$$
\sum_{n=0}^\infty w(t/2^n)=1, \quad \forall t\geq1.
$$
Let   $f$ be a distribution on $L^1 (\bbT)$ with  the   Fourier coefficients $\wh f(j)$, ${j\in\bbZ}$. 
For $n\in \bbZ$, let us denote by $f_n$ the polynomial 
$$
f_n(\mu)=\sum_{j\in\bbZ} w(\pm j/2^{|n|})\wh f(j)\mu^j, \quad \mu\in \bbT, \quad \pm n >0,
$$
and let
$f_0(\mu)=\wh f(1)\mu+ \wh f(0) + \wh f(-1)\overline{\mu}$.
The Besov class $B_{p,p}^{1/p}$ is defined by the condition
\begin{equation}
\sum_{n\in\bbZ} 2^{\abs{n}}\norm{f_n}_{L^p}^p <\infty. 
\label{besov}
\end{equation}

By definition, 
$f\in \mathfrak B_{p,\infty}^{1/p}$ if and only if
$$
\sup_{t>0}t^p \sum_{n\in\bbZ} 2^{\abs{n}} m (\{\mu\in\bbT: \abs{f_n(\mu)}>t\})<\infty
$$
which is the ``weak version" of the condition \eqref{besov}. 
We have 
$$
B^{1/ p}_{p,p} \subset \mathfrak{B}^{1/ p}_{p,\infty}\subset B^{1/ q}_{q,q}, \quad \forall q>p.
$$

The H\"older-Zygmund class $\Lambda_\alpha$, $\alpha>0$, is defined in terms of the difference operator
$$
(\Delta_{\tau}f) (\mu)= f(\tau \mu)- f(\mu), \quad \tau \in\bbT.
$$
By definition, 
$f\in \Lambda_\alpha$ if and only if
$$
\|(\Delta_{\tau}^n f )(\mu)\|_{L^\infty}\leq C |\tau -1|^\alpha
$$
where $n$ is an arbitrary integer such that $n>\alpha$.   Observe that $\Lambda_\alpha$ coincides with the H\"older class $C^\alpha$ if $\alpha$ is not integer and $C^\alpha\subset \Lambda_\alpha$ if $\alpha$  is an integer. We also note that
$\Lambda_{\alpha}\subset \mathfrak{B}^{\alpha}_{1/\alpha,\infty}$.

\section*{Acknowledgements}
The authors are grateful to the Departments of Mathematics of King's College London and of the University of Rennes 1 (France)  for the financial support. The second author (D.Y.) acknowledges also the  support and hospitality of the Isaac Newton Institute for Mathematical Sciences (Cambridge  University, UK) where a part of this work has been done during the program Periodic and Ergodic Spectral Problems.

\end{document}